\documentclass[preprint,12pt]{elsarticle}



\usepackage[normalem]{ulem}

\usepackage{xcolor}
\usepackage{amsmath,amssymb,amsfonts,amsthm,enumerate,multirow}
\usepackage{tikz}

\newtheorem{lemma}{Lemma}
\newtheorem{proposition}{Proposition}
\newtheorem{problem}{Problem}

\journal{Inverse Problems}

\begin{document}

\begin{frontmatter}

\title{Direct inversion of the Longitudinal Ray Transform for 2D residual elastic strain fields}

\author[Ncle]{CM Wensrich}
\cortext[Ncle]{Corresponding author:}
\ead{christopher.wensrich@newcastle.edu.au}
\author[Mchr]{S Holman}
\author[Chile]{M Courdurier}
\author[Mchr]{WRB Lionheart}
\author[Sobo]{AP Polyakova}
\author[Sobo]{IE Svetov}

\affiliation[Ncle]{organization={School of Engineering, University of Newcastle, Australia},
            addressline={University Drive}, 
            city={Callaghan},
            postcode={2308}, 
            state={NSW},
            country={Australia}}

\affiliation[Mchr]{organization={School of Mathematics, University of Manchester},
            addressline={Alan Turing Building, Oxford Rd}, 
            city={Manchester},
            postcode={M13 9PL}, 
            country={UK}}

\affiliation[Chile]{organization={Department of Mathematics, Pontificia Universidad Católica de Chile},
            addressline={Avda. Vicuña Mackenna 4860}, 
            city={Macul, Santiago},
            country={Chile}}

\affiliation[Sobo]{organization={Sobolev Institute of Mathematics, Novosibirsk State University},
            addressline={ 630090}, 
            city={Novosibirsk},
            country={Russia}}

\begin{abstract}
We examine the problem of Bragg-edge elastic strain tomography from energy resolved neutron transmission imaging.  A new approach is developed for two-dimensional plane-stress and plane-strain systems whereby elastic strain can be reconstructed from its Longitudinal Ray Transform (LRT) as two parts of a Helmholtz decomposition based on the concept of an Airy stress potential.  The solenoidal component of this decomposition is reconstructed using an inversion formula based on a tensor filtered back projection algorithm whereas the potential part can be recovered using either Hooke's law or a finite element model of the elastic system.  The technique is demonstrated for two-dimensional plane-stress systems in both simulation, and on real experimental data.  We also demonstrate that application of the standard scalar filtered back projection algorithm to the LRT in these systems recovers the trace of the solenoidal component of strain and we provide physical meaning for this quantity in the case of 2D plane-stress and plane-strain systems.

\end{abstract}



\begin{keyword}
Strain tomography \sep Longitudinal Ray Transform \sep Bragg edge \sep Neutron transmission



\end{keyword}

\end{frontmatter}


    %

\section{Introduction and context}

Elastic strain imaging via energy-resolved neutron transmission measurement (also known as `Bragg-edge imaging') forms a natural tensor-tomography problem aimed at reconstructing the full triaxial elastic strain field within a physical sample from a set of lower-dimensional scalar images.  

The full solution to this tomography problem will have a key impact in a number of areas in science and engineering focused on the study of residual stress in materials.  An important topical example includes the development of additive manufacturing techniques for metallic components where residual stresses generated by the thermo-mechanics of deposition are a significant and ever present concern.  Tomographic techniques for strain have the potential to provide a unique insight in this area.

The Bragg-edge strain tomography problem has been studied for more than a decade, with various experimental demonstrations on special cases (e.g. axisymmetric systems and \textit{in situ} applied loads) (e.g. \cite{hendriks2017bragg, abbey2012neutron, kirkwood2015neutron}), and, more recently, solutions for general systems using Bayesian and least-squares techniques constrained by equilibrium (e.g. \cite{gregg2018tomographic, hendriks2019tomographic}).  In this paper we examine this problem from the perspective of developing a direct inversion algorithm.

With reference to Figure \ref{LRTGeom}, strain images of this type refer to projections of the average of elastic strain, $\epsilon$, along straight-line ray paths through a sample $\Omega$ of the form
\begin{equation}
\frac{1}{L} \int_{-\infty}^\infty \epsilon_{ij}(x_0+s\xi)\xi_{i}\xi_{j}ds,
\label{BEStrain}
\end{equation}
where $L$ is the path-length associated with a ray passing through the point $x_0 \in \Omega$, travelling in the direction $\xi$, and, as in the rest of the paper, we use the summation convention for repeated indices. For convenience, strain outside of the boundary of the sample is assigned a value of zero. From many measurements of this form, we wish to reconstruct the original strain field. 

\begin{figure}[h]
\begin{center}
    \label{LRTGeom}
    \includegraphics[width=0.3\linewidth]{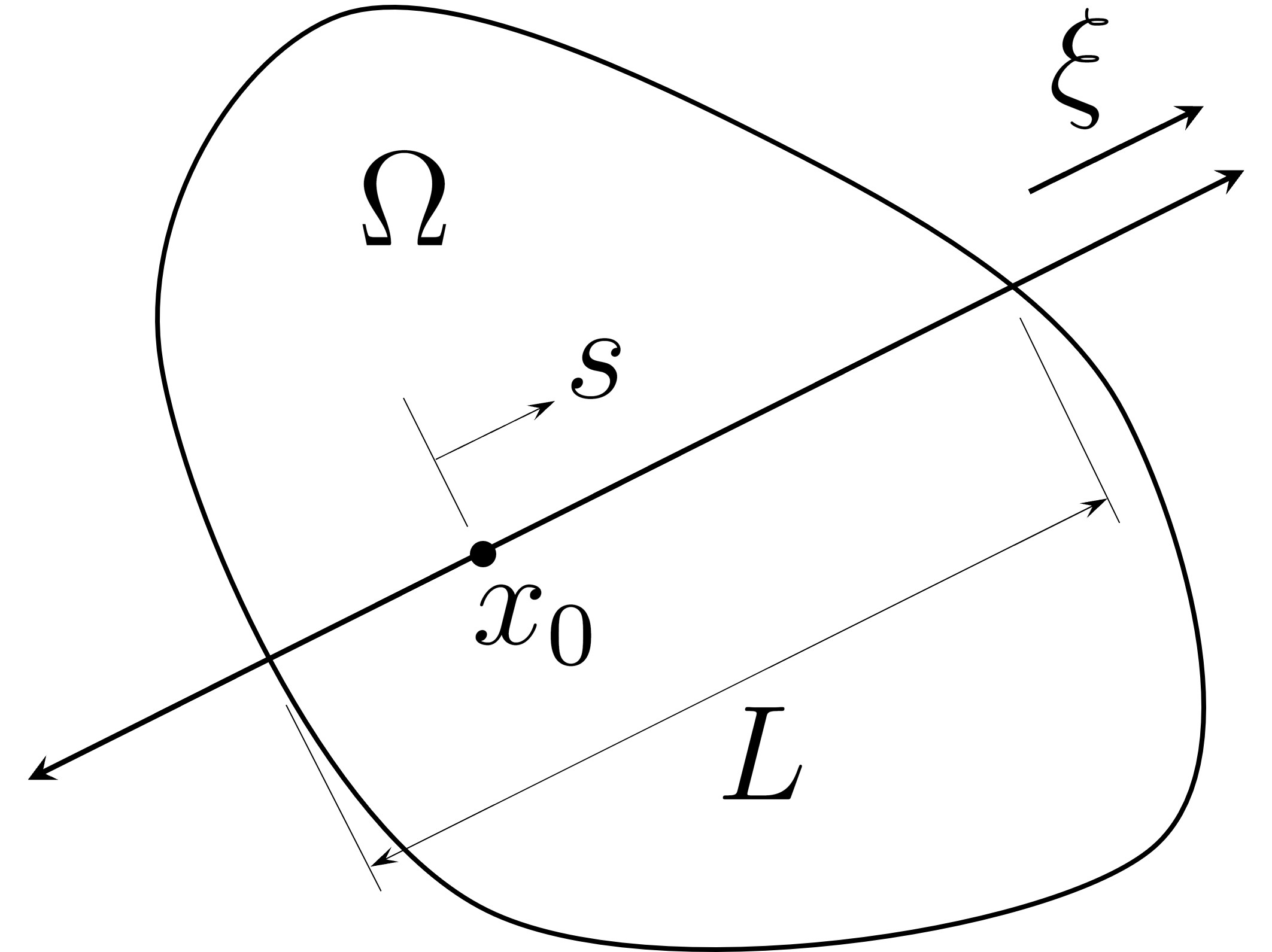}
    \caption{Geometry of the Longitudinal Ray Transform and Bragg-edge strain measurements.}
\end{center}
\end{figure} 

Bragg-edge strain measurements are naturally related to the Longitudinal Ray Transform (LRT), $I$, which can be written for suitable $f \in L^2(\mathcal{S}^m;\mathbb{R}^n)$ as
\begin{equation}
If(x_0,\xi)=\int_{-\infty}^\infty f_{{i_1}{i_2}...{i_m}}(x_0+s\xi)\xi_{i_1}\xi_{i_2}...\xi_{i_m} ds,
\end{equation}
with the extension to all of $L^2(\mathcal{S}^m;\mathbb{R}^n)$ achieved in the usual way (see below for definitions and notation). 

Unfortunately, the LRT has a large null space that creates a well-known issue with direct tomographic reconstruction of strain from Bragg-edge imaging and LRT measurements in general \cite{lionheart2015diffraction}.  For $f \in L^2(\mathcal{S}^m;\mathbb{R}^n)$, this null space consists of potential fields of the form $du$ for any $u$ that vanishes at infinity.  
 
The structure of this null space, particularly in the case of bounded support, is important for reconstruction.  In this context, we will explore the mechanics of linear elastic systems in the context of tensor decompositions and inversion formulas related to the LRT. Through this process, we will provide two direct inversion techniques for the LRT for two-dimensional elastic strain fields that satisfy mechanical equilibrium on bounded domains in the absence of externally applied traction forces.  While the detailed context and precise definitions will follow, along the way we will demonstrate;
\begin{enumerate}
    \item{In the case of two-dimensional elastic strain fields, the assumption of zero boundary traction, a condition on stress, implies that the Helmholtz decomposition of strain on a bounded and unbounded domain are equivalent (up to extension by zero).   We also demonstrate more generally in $\mathbb{R}^n$ that the Helmholtz decomposition of a symmetric tensor field on a bounded or unbounded domain are equivalent if and only if the harmonic component of the Helmholtz decomposition on the bounded domain is zero (Lemma \ref{lem_ext}). }
    \item{The general inversion formula for the solenoidal component of the LRT on $m$-rank tensor fields over the entirety of $\mathbb{R}^n$ due to Sharafutdinov \cite{sharafutdinov2012integral} is equivalent to that of Louis \cite{louis2022inversion} and Derevtsov \textit{et al.} \cite{derevtsov2015tomography} for $m=n=2$.  The latter inversion formula was previously restricted to fields with zero harmonic component on the unit ball -- we extend its use to all of $\mathbb{R}^2$ (Lemma \ref{lem1}).}
    \item{In $\mathbb{R}^2$, the Helmholtz decomposition of any elastic strain field can be specified directly through the concept of an Airy stress potential and Hooke's law (Proposition \ref{SupportLemma}).}
    \item{The application of standard scalar filtered back projection to the LRT recovers the trace of the solenoidal component, which in the case of elastic strain in $\mathbb{R}^2$ is proportional to the hydrostatic component of stress (see section \ref{sec6}).}
\end{enumerate}
 
We begin by introducing the notation used throughout the paper.

 \section{Notation and definitions}

First, $\Omega$ will be an open subset of $\mathbb{R}^n$ with Lipschitz boundary possibly equal to $\mathbb{R}^n$ in the following definitions, and we will write $\mathbb{S}^{n-1}$ for the unit sphere in $\mathbb{R}^n$. Given a vector $v \in 
\mathbb{R}^2$, we write $v^\perp$ for the anti-clockwise rotation of $v$ by 90 degrees. If $v=(v_1,v_2)$, $v^\perp=(-v_2,v_1)$. The set $\mathcal{C}^\infty(\mathcal{S}^m;\Omega)$ will be the space of smooth $m$-rank symmetric tensor fields on $\Omega$ with continuous derivatives of all orders and $\mathcal{C}_c^\infty(\mathcal{S}^m;\Omega)$ the subspace of $\mathcal{C}^\infty(\mathcal{S}^m;\Omega)$ comprising fields with compact support in $\Omega$.

We use the following differential operators:
\begin{itemize}
	\item[]{$d$ -- Symmetric gradient operator. For $f \in \mathcal{C}^\infty(\mathcal{S}^m;\mathbb{R}^n)$, $df \in \mathcal{C}^\infty(\mathcal{S}^{m+1};\mathbb{R}^n)$ will be the symmetric derivative defined in \cite{sharafutdinov2012integral}. This coincides with the gradient when $m = 0$ and for $u \in \mathcal{C}^\infty(\mathcal{S}^1;\mathbb{R}^n)$
    \[
    (du)_{ij} = \frac{1}{2}\left (\frac{\partial u_{i}}{\partial x_{j}} + \frac{\partial u_{j}}{\partial x_{i}} \right ),
    \]
    or equivalently $du=\tfrac{1}{2}\big(\nabla \otimes u + (\nabla \otimes u)^T\big)$, where $\otimes$ refers to dyadic product and $(\cdot)^T$ refers to the transpose operation;}
    \item[]{$d^\perp$ -- Perpendicular symmetric gradient operator. Note this operator is only defined in dimension $n=2$. For $f \in \mathcal{C}^\infty(\mathcal{S}^m;\mathbb{R}^2)$, $d^{\perp}f \in \mathcal{C}^\infty(\mathcal{S}^{m+1};\mathbb{R}^2)$ is the symmetrisation of the perpendicular gradient of the components of $f$ introduced in \cite{derevtsov2015tomography}. 

    For $\psi \in \mathcal{C}^\infty(\mathbb{R}^2)$ this is given by
    \[
    (d^\perp \psi)_{i} = \frac{\partial \psi}{\partial x^j} e_{ji3}
    \]
    and for $u \in \mathcal{C}^\infty(\mathcal{S}^1;\mathbb{R}^2)$
    \[
    (d^\perp u)_{ij} = \frac{1}{2}\left (\frac{\partial u_{i}}{\partial x_k} e_{kj3} + \frac{\partial u_{j}}{\partial x_k} e_{ki3} \right ),
    \]
    where $e_{ijk}$ is the usual Levi-Civita permutation symbol.  Equivalently $d^\perp\psi = \nabla^\perp \psi$ and $d^\perp u=\tfrac{1}{2}\big(\nabla^\perp \otimes u + (\nabla^\perp \otimes u)^T\big)$;}
    
    \item[]
    {$\text{Div}$ -- The divergence operator which is the formal adjoint of $-d$ and maps $\mathcal{C}^\infty(\mathcal{S}^{m+1};\mathbb{R}^n) \rightarrow \mathcal{C}^\infty(\mathcal{S}^{m};\mathbb{R}^n)$. This is the contraction of the gradient of a tensor field and for the general formula see \cite{sharafutdinov2012integral}. For $u \in \mathcal{C}^\infty(\mathcal{S}^1;\mathbb{R}^n)$, $\text{Div}(u)$ is the standard divergence of $u$;}
    \item[]
    {$\text{Div}^\perp$ -- The perpendicular divergence which is the formal adjoint of $-d^\perp$ and maps $\mathcal{C}^\infty(\mathcal{S}^{m+1};\mathbb{R}^2) \rightarrow \mathcal{C}^\infty(\mathcal{S}^{m};\mathbb{R}^2)$. This is the same as the operator $\delta^\perp$ in \cite{derevtsov2015tomography}.}
    
\end{itemize}
We additionally say that a tensor field is divergence-free or solenoidal if its divergence is zero. The differential operators are initially defined on smooth tensor fields, but can be extended to fields with distributional coefficients.

For function spaces we use:

 \begin{itemize}
	\item[]{$L^2(\mathcal{S}^m;\Omega)$ -- The space of square-integrable $m$-rank symmetric tensor fields on $\Omega$ with norm $\|u\|_{L^2(\mathcal{S}^m;\Omega)}$}.
        \item[]
    {$H^k(\mathcal{S}^m;\Omega)$ -- The Sobolev space of square-integrable $m$-rank symmetric tensor fields on $\Omega$ whose weak derivatives up to order $k$ are also square-integrable. }
	
        \item[] 
        
    $\dot{H}^1_0(\mathcal{S}^m;\Omega)$ -- The homogeneous Sobolev space which is the closure of $\mathcal{C}_c^\infty(\mathcal{S}^m;\Omega)$ with respect to the norm $\|u\|_{\dot{H}^1_0(\mathcal{S}^m;\Omega)}=\|d u\|_{L^2(\mathcal{S}^{m+1};\Omega)}$.
    
        \item[]
    
    $\dot{H}^2_0(\mathbb{R}^n)$ --
    The homogeneous Sobolev space which is the closure of $\mathcal{C}_c^\infty(\Omega)$ with respect to the norm $\| \Delta u\|_{L^2(\mathbb{R}^n)}$.
    \end{itemize}
The homogeneous Sobolev spaces are equivalent to the standard Sobolev spaces of fields with trace zero when $\Omega$ is bounded, but different for unbounded $\Omega$.
    
We will mostly be concerned with tensors of rank either $m=1$ or $2$ and use the standard notations $f:g$ for contraction of 2-rank tensors and $f\cdot g$ for multiplication of a 2-rank tensor with a 1-rank tensor, or the dot product of 1-rank tensors.

We now return to the topic and begin with a review of Helmholtz decomposition and inversion of the LRT, both in general, and in the context of elastic strain in $\mathbb{R}^2$.

\section{Helmholtz decompositions and LRT inversion formulas}


As per \cite{sharafutdinov2012integral} and others, the null space of the LRT forms part of the orthogonal Helmholtz decomposition in $\mathbb{R}^n$ of symmetric tensor fields of the form
\begin{equation}
\label{decomp}
f=du+{^s}f,
\end{equation}
where ${^s}f$ is the divergence-free `solenoidal' component of $f \in L^2(\mathcal{S}^m;\mathbb{R}^n)$, $m\geq 1$, and $u \in \dot{H}_0^1(\mathcal{S}^{m-1};\mathbb{R}^n)$ gives the `potential' part $du$. Here the differential operators are understood to act in the sense of distributions on $\mathbb{R}^n$, and for given $f$ the decomposition \eqref{decomp} is unique. 

Using the fundamental theorem of calculus, it is easy to check that $If = I{^s}f$, and so at best we can hope to recover ${^s}f$ from the LRT of $f$. In fact, such recovery is possible as demonstrated by Sharafutdinov \cite{sharafutdinov2012integral} (see \eqref{fullSharafutdinov} below). However, an interesting practical problem exists when applying this to real systems; even if $f$ is compactly supported, $u$ and ${^s}f$ in \eqref{decomp} may have unbounded support, and for practical computation it is usually necessary to consider a bounded domain. In this light, let us introduce solenoidal decomposition on a bounded domain.

Let $\Omega \subset \mathbb{R}^n$ be a bounded domain with Lipschitz boundary and outward surface normal $n$ on $\partial\Omega$. Similar to (\ref{decomp}), there is a unique decomposition of $f \in L^2(\mathcal{S}^m;\mathbb{R}^n)$, $m \geq 1$, restricted to this set of the form (see \cite{schweizer} for the case of vector fields)
\begin{equation}\label{decomp_bd}
f = du_{\Omega} + {^s}f_\Omega + dh_{\Omega} \quad \mbox{on $\Omega$},
\end{equation}
where $u_\Omega \in \dot{H}^1_0(\mathcal{S}^{m-1};\Omega)$, $h_{\Omega} \in H^1(\mathcal{S}^{m-1};\Omega)$, known as the `harmonic part', satisfies
\[
\text{Div} (d h_\Omega) = 0 \quad \mbox{on $\Omega$},
\]
and ${^s}f_{\Omega} \in L^2(\mathcal{S}^m;\Omega)$ satisfies the weak equation
\begin{equation}\label{weakfomega}
\int_\Omega ({^s}f_{\Omega})_{i_1 \ ... \ i_m}  \frac{\partial \varphi_{i_1 \ ... \ i_{m-1}}}{{\partial x^{i_m}}} \ \mathrm{d} x = 0 \quad \forall \varphi \in H^1(\mathcal{S}^{m-1};\Omega).
\end{equation}
It is clear from \eqref{weakfomega} that ${^s}f_{\Omega}$ extended by zero to $\mathbb{R}^n$ is divergence-free. A key point for our result is that, for fields where the boundary trace makes sense, this extension by zero is only divergence-free when the boundary condition  $({^s}f_{\Omega})_{i_1 \ ... \ i_{m}}n_{i_{m}}|_{\partial \Omega} = 0$ holds. For an in depth discussion of weak formulation of the Helmholtz decomposition in the case of $L^2$ vector fields, see \cite{schweizer}.

To relate reconstruction formulae for the LRT on $\mathbb{R}^n$ to formulae on a bounded set, we must consider the relationship between the decompositions \eqref{decomp} and \eqref{decomp_bd}. Indeed, when the harmonic part vanishes in \eqref{decomp_bd}, the solenoidal decomposition on the bounded set $\Omega$ is related to the one on $\mathbb{R}^n$ as in the following lemma.

\begin{lemma} \label{lem_ext}
    Suppose that $\Omega$ contains the support of $f$. If $h_\Omega$ in \eqref{decomp_bd} is zero, then ${^s}f$ and $u$ in \eqref{decomp} are equal to the extension by zero of ${^s}f_\Omega$ and $u_\Omega$ to $\mathbb{R}^n$. Conversely, if ${^s}f$ and $u$ in \eqref{decomp} are supported in $\Omega$, then $h_\Omega = 0$.
\end{lemma}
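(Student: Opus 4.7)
The plan is to use the uniqueness of each of the two Helmholtz decompositions \eqref{decomp} and \eqref{decomp_bd} and match them by zero-extension in one direction and by restriction in the other.

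\textbf{Forward direction.} Suppose $h_\Omega = 0$, so that $f = du_\Omega + {^s}f_\Omega$ on $\Omega$. Let $\tilde u$ and $\tilde g$ denote the extensions by zero of $u_\Omega$ and ${^s}f_\Omega$ to $\mathbb{R}^n$. Because $u_\Omega \in H^1_0(\mathcal{S}^{m-1};\Omega)$ and $\partial\Omega$ is Lipschitz, $\tilde u$ lies in $H^1(\mathcal{S}^{m-1};\mathbb{R}^n)$ and its distributional symmetric derivative $d\tilde u$ equals the zero-extension of $du_\Omega$. Since $\mathrm{supp}(f) \subset \Omega$, the identity $f = d\tilde u + \tilde g$ then holds on all of $\mathbb{R}^n$. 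The remaining step is to verify that $\tilde g$ is divergence-free on $\mathbb{R}^n$: for any test field $\varphi \in \mathcal{C}_c^\infty(\mathcal{S}^{m-1};\mathbb{R}^n)$ one has $\int_{\mathbb{R}^n} \tilde g : d\varphi = \int_\Omega {^s}f_\Omega : d(\varphi|_\Omega)$, and this vanishes by \eqref{weakfomega} applied to the admissible field $\varphi|_\Omega \in H^1(\mathcal{S}^{m-1};\Omega)$. Uniqueness of \eqref{decomp} then forces ${^s}f = \tilde g$ and allows the choice $u = \tilde u$.

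\textbf{Converse.} Assume ${^s}f$ and $u$ in \eqref{decomp} are supported in $\Omega$. I restrict both to $\Omega$ and show that this produces a decomposition of the form \eqref{decomp_bd} with harmonic part zero. For $u|_\Omega$: the fact that $u$, viewed as an $H^1(\mathbb{R}^n)$ field vanishing off $\Omega$, has vanishing trace on $\partial\Omega$ (again using the Lipschitz assumption) places $u|_\Omega$ in $H^1_0(\mathcal{S}^{m-1};\Omega)$. For $({^s}f)|_\Omega$: given any $\varphi \in H^1(\mathcal{S}^{m-1};\Omega)$, extend it to $\tilde\varphi \in H^1(\mathcal{S}^{m-1};\mathbb{R}^n)$ by a bounded extension operator on a Lipschitz domain, and compute $\int_\Omega ({^s}f)|_\Omega : d\varphi = \int_{\mathbb{R}^n} ({^s}f) : d\tilde\varphi = 0$, the first equality using that ${^s}f$ vanishes off $\Omega$ and the second using that ${^s}f$ is divergence-free on $\mathbb{R}^n$. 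Hence $f = d(u|_\Omega) + ({^s}f)|_\Omega$ on $\Omega$ is a valid representation of the form \eqref{decomp_bd} with $h_\Omega = 0$; uniqueness of \eqref{decomp_bd} finishes the argument.

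\textbf{Main obstacle.} The technical heart of the proof is the interplay between ``$H^1$-support inside $\Omega$'' and ``homogeneous boundary trace'': specifically, that the zero-extension of an $H^1_0(\Omega)$ field lies in $H^1(\mathbb{R}^n)$ and commutes with $d$, and conversely that an $H^1(\mathbb{R}^n)$ field supported in $\Omega$ restricts to an $H^1_0(\Omega)$ field. Both facts rest on the Lipschitz regularity of $\partial\Omega$ via the trace theorem, and they are where all the boundary-regularity content of the lemma is concentrated. Once these equivalences are taken as standard, the rest reduces to extending or restricting test fields and invoking the two uniqueness statements already stated in the paper.
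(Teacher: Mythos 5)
Your proposal is correct and follows essentially the same route as the paper's proof: extend $u_\Omega$ and ${^s}f_\Omega$ by zero and invoke uniqueness of \eqref{decomp} in one direction, restrict $u$ and ${^s}f$ and invoke uniqueness of \eqref{decomp_bd} in the other, with the divergence-free conditions transferred via \eqref{weakfomega} and test-field restriction/extension. You simply make more explicit the trace-theorem facts about $H^1_0$ and zero-extension that the paper takes as standard.
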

\begin{proof}
    Assume that decomposition \eqref{decomp_bd} holds with $h_\Omega = 0$ and extend $u_\Omega$ and ${^s}f_\Omega$ to $v$ and $g$ on $\mathbb{R}^n$ by setting them equal to zero outside of $\Omega$. By \eqref{weakfomega}, $g$ is then divergence-free on $\mathbb{R}^n$. And since $u_\Omega \in \dot{H}^1_0(\mathcal{S}^{m-1};\Omega)$ then $v \in \dot{H}_0^1(\mathcal{S}^{m-1};\mathbb{R}^n)$ and $d v$ is $d u _\Omega$ extended by zero to $\mathbb{R}^n$. By uniqueness of the decomposition in \eqref{decomp}, $u=v$ and ${^s}f=g$.

    Conversely, suppose that ${^s}f$ and $u$ in \eqref{decomp} are supported in $\Omega$ and define $\tilde u_\Omega$ and ${^s}\tilde f_\Omega$ by restricting their domain to $\Omega$.  Then, since $u\in \dot{H}_0^1(\mathcal{S}^1;\mathbb{R}^n)$ with support contained in $\Omega$, its restriction $\tilde u_\Omega$ is in $\dot{H}^1_0(\mathcal{S}^1;\Omega)$. Additionally, we can see that \eqref{weakfomega} holds for ${^s}\tilde f_\Omega$ because the same must hold for ${^s}f$ on $\mathbb{R}^n$ for any $\varphi \in H^1(\mathcal{S}^{m-1};\mathbb{R}^n)$. By uniqueness of the decomposition  we see that \eqref{decomp_bd}  holds with $u_\Omega=\tilde u_\Omega$, ${^s}f_\Omega={^s}\tilde f_\Omega$ and $h_\Omega = 0$ on $\Omega$, as claimed.
\end{proof}

Now let us turn to inversion of the LRT. Various inversion formulas exist that can uniquely recover $^sf$ from $If$ (e.g. \cite{sharafutdinov2012integral, derevtsov2015tomography, louis2022inversion}).  Sharafutdinov \cite{sharafutdinov2012integral} provides the general result for $f \in L^2(\mathcal{S}^m;\mathbb{R}^n)$ as
\begin{equation}
\label{fullSharafutdinov}
^sf=(-\Delta)^{1/2}\Big[\sum_{k=0}^{[m/2]}c_k(i-\Delta^{-1}d^2)^kj^k\Big]\mu^m If,
\end{equation}
where $c_k$ are specified scalar coefficients, powers of the Laplacian $(-\Delta)^{1/2}$ and $(-\Delta)^{-1}$ are defined via the Fourier transform, the operators $i$ and $j$ respectively refer to product and contraction with the Kronecker tensor, and $\mu^m$ is the formal adjoint of $I$ when the measure on $\mathbb{S}^{n-1}$ is normalised to one.  In practical terms, $\mu^m$ is related to the adjoint of the X-ray transform\footnote{Equivalent to the Radon transform in 2D.} (i.e. scalar back-projection), $\mathcal{R^*}$, acting component-wise with back-projections weighted by the diadic product of $\xi$ with itself $m$-times;
\begin{equation}
\mu^m_{i_1i_2...i_m}= \frac{1}{2 \pi^{n/2}}\Gamma\left (\tfrac{n}{2} \right )\mathcal{R}^*\xi_{i_1}\xi_{i_2}...\xi_{i_m}.
\end{equation}
Note that the constant factor is present because of the normalisation of the measure on $\mathbb{S}^{n-1}$ in \cite{sharafutdinov2012integral}. 

For 2D elastic strain $\epsilon \in L^2(\mathcal{S}^2;\mathbb{R}^2)$, \eqref{fullSharafutdinov} simplifies to
\begin{equation}
{^s}\epsilon = \frac{1}{2\pi}(- \Delta)^{1/2}\Big[c_0 + c_1 (\text{\bf{I}}-\Delta^{-1}d^2) tr \Big] I^* I \epsilon,
\label{SharInv}
\end{equation}
where $c_0 = 3/4, c_1 = -1/4$, $tr$ is the trace operator, $\text{\bf{I}}$ is the 2-rank identity and $I^*=\mathcal{R}^*\xi\otimes\xi$.  
In comparison, Derevtsov and Svetov \cite{derevtsov2015tomography} and Louis \cite{louis2022inversion} consider recovery when $\Omega$ is the unit ball in $\mathbb{R}^2$, implicitly assuming also that the harmonic part of the field is equal to zero so that ${^s}\epsilon = {^s}\epsilon_\Omega$ by Lemma \ref{lem_ext}. In this context, \cite{louis2022inversion} provides a much simpler inversion formula of the form
\begin{equation}
^s\epsilon = 
\frac{1}{4 \pi}(- \Delta)^{1/2} I^* I \epsilon, \quad \textnormal{ in $\Omega$,}
\label{InvLRT}
\end{equation}
while Derevtsov and Svetov \cite{derevtsov2015tomography} provide the same formula \eqref{InvLRT} but, due to a typographical error, multiplied by a factor of $2$ on the right side. 

We now show in Lemma \ref{lem1} that \eqref{SharInv} and \eqref{InvLRT} are indeed equivalent. This extends the inversion results of \cite{louis2022inversion,derevtsov2015tomography} from the unit ball to $\mathbb{R}^2$, and handles the case of non-vanishing harmonic part, which was not considered in \cite{louis2022inversion,derevtsov2015tomography}.

\begin{lemma} \label{lem1}
    For any $\epsilon \in L^2(\mathcal{S}^2;\mathbb{R}^2)$, the right hand sides of \eqref{SharInv} and \eqref{InvLRT} are equal and hence \eqref{InvLRT} can be used on all of $\mathbb{R}^2$ regardless of any harmonic component.
\end{lemma}
\begin{proof}
Taking the component-wise Fourier transform with spatial frequency vector $\kappa$, (\ref{SharInv}) can be written
\begin{equation}
{^s}\hat\epsilon = \frac{1}{2\pi}|\kappa|\Big[c_0 + c_1 \Big(\text{\bf{I}}- \frac{\kappa\kappa^T}{|\kappa|^2} \Big)tr \Big] \hat{g},
\label{FSharInv}
\end{equation}
where $g = I^* I \epsilon$.  Since $^s\epsilon$ is solenoidal ${^s}\hat \epsilon \kappa =0$ and we can write ${^s}\hat \epsilon=\alpha \kappa^\bot (\kappa^\bot)^T$ for some $\alpha \in L^2(\mathbb{R}^2)$.  Hence (\ref{FSharInv}) becomes
\[
\alpha \kappa^\bot (\kappa^\bot)^T = \frac{1}{2\pi}|\kappa|\Big[c_0 + c_1 \Big(\text{\bf{I}}- \frac{\kappa\kappa^T}{|\kappa|^2} \Big)tr \Big] \hat{g}.
\]
Multiplying by $\kappa^\bot (\kappa^\bot)^T$ and rearranging;
\[
\alpha \kappa^\bot (\kappa^\bot)^T |\kappa|^2 = {^s}\hat\epsilon |\kappa|^2 = \frac{1}{2\pi}\kappa^\bot (\kappa^\bot)^T |\kappa|\Big[c_0 + c_1 \Big(\text{\bf{I}}- \frac{\kappa\kappa^T}{|\kappa|^2} \Big)tr \Big]  \hat g 
\]
which provides
\[
{^s}\hat\epsilon = \frac{1}{2\pi}\frac{\kappa^\bot (\kappa^\bot)^T}{|\kappa|}\Big[c_0 + c_1 \text{\bf{I}} tr \Big] \hat{g}.
\]
Now $g$ is also solenoidal and hence can also be written $\hat g=\beta \kappa^\bot (\kappa^\bot)^T$ for some $\beta \in L^2(\mathbb{R}^2)$;
\begin{align*}
{^s}\hat\epsilon &= \frac{1}{2\pi}\frac{\kappa^\bot (\kappa^\bot)^T}{|\kappa|}\Big[c_0 \kappa^\bot (\kappa^\bot)^T + c_1 \text{\bf{I}}|\kappa|^2 \Big] \beta \\
&=\frac{1}{2\pi}c_0 |\kappa|  \beta \kappa^\bot (\kappa^\bot)^T +\frac{1}{2\pi} c_1 |\kappa|  \beta \kappa^\bot (\kappa^\bot)^T \\
&=\frac{1}{2\pi}|\kappa|(c_0+c_1)\hat g.
\end{align*}
In the spatial domain this implies over all of $\mathbb{R}^2$:
\begin{align*}
{^s}\epsilon &= \frac{1}{2\pi}(-\Delta)^{1/2}(c_0+c_1)I^*I\epsilon \\
&=\frac{1}{4\pi}(-\Delta)^{1/2}I^*I\epsilon,
\end{align*}
which is identical to (\ref{InvLRT}) but on all of $\mathbb{R}^2$.
\end{proof}

Given Lemma \ref{lem1}, we use only \eqref{InvLRT} which provides a component-wise approach to reconstruction of the solenoidal component of strain in $\mathbb{R}^2$ of the form
\begin{equation} \label{TFBP}
{^s}\epsilon = \frac{1}{4\pi}\mathcal{R}^* \Lambda \xi \otimes \xi I\epsilon,
\end{equation}
where $\Lambda$ is the Ram-Lak filter (or similar) used in standard scalar Filtered Back Projection (FBP). 

Because of Lemma \ref{lem1}, we know that this inversion formula recovers the solenoidal part on all of $\mathbb{R}^2$ with potentially unbounded support regardless of the finite nature of the sample. By Lemma \ref{lem_ext}, the solenoidal component of $\epsilon$ will have support contained in a bounded domain only if its harmonic part vanishes, and so it is important to know when this will occur in the context of strain. 

Before we address this, we first provide a brief review of the mechanics of stress and strain on the plane in the context of this work.

\section{Elasticity theory and residual stress} \label{sec:elast}

Consider a sample consisting of an elastic body in $\mathbb{R}^3$ represented by the bounded domain $\Omega$ with outward surface normal $n$. Within $\Omega$ we can decompose the total strain at each point, $\epsilon_T$, into an elastic component, $\epsilon$ and an `eigenstrain', $\epsilon^*$ (e.g. permanent strain introduced by plasticity, phase change, thermal expansion, etc.) \cite{korsunsky2017teaching, mura1982micromechanics}
\begin{equation}
\epsilon_T=\epsilon+\epsilon^*.
\label{strain_decomp}
\end{equation}
The elastic component of strain is related to stress, $\sigma$, through Hooke's law, which in its most general form, can be written in terms of a 4-rank stiffness tensor; $\sigma_{ij}=C_{ijkl}\epsilon_{kl}$.  In the isotropic case with Young's modulus $E$ and Poisson's ratio $\nu$
\begin{equation}
    \label{Hooke}
    C_{ijkl}= \frac{E}{1+\nu} \Big(\frac{\nu}{1-2\nu}\delta_{ij} \delta_{kl} + \frac{1}{2}\left( \delta_{ik} \delta_{jl} + \delta_{il} \delta_{jk}\right)\Big).
\end{equation}
Governing equations can be assembled for this system on the basis of equilibrium, compatibility of strain and boundary conditions.  In the absence of body forces (gravity, magnetism, etc.) mechanical equilibrium holds that
\begin{equation}
\label{equilib}
\text{Div}(\sigma) = \text{Div}\big(C:\epsilon\big)=0.
\end{equation}
The total strain physically originates as the symmetric gradient of a displacement field (i.e. is potential) and can be expressed as $\epsilon_T = du$ for some $u$, where, in general, $u\ne 0$ on $\partial\Omega$.  This condition is known as strain `compatibility' which for a simply connected domain can be expressed as a vanishing Saint-Venant 
operator\footnote{The Saint-Venant operator is defined by
\[
W_{ijkl}(f)=\frac{\partial^2f_{ij}}{\partial x_k\partial x_l}  + \frac{\partial^2f_{kl}}{\partial x_i\partial x_j}  -\frac{\partial^2f_{il}}{\partial x_j\partial x_k} - \frac{\partial^2f_{jk}}{\partial x_i\partial x_l}.
\]
In $\mathbb{R}^3$, this simplifies to six unique components specified by the 2-rank symmetric incompatibility tensor $Rf=\nabla \times (\nabla \times f)^T$, or component-wise $[Rf]_{ij}=e_{kpi}e_{lqj}\nabla_p\nabla_q f_{kl}$ where $e_{ijk}$ is the Levi-Civita permutation symbol.  

In a simply connected domain in $\mathbb{R}^n$, $W(f)=0$ if and only if $f=du$ for some $u$. On a multiply connected domain with $k$ holes, $n(n+1)k/2$ additional integral constraints are required along with $W(f)=0$ to imply $f=du$ (see \cite[Proposition 2.8]{yavari2013compatibility}).}, $W(\epsilon_T)=0$, or
\begin{equation}
\label{compat}
W(\epsilon)=-W(\epsilon^*).
\end{equation}
The final ingredient is to specify boundary conditions experienced by the sample.  These can vary, but in the case of `residual stress' problems, the surface of the sample is typically free of any traction
\begin{equation}
\label{BC}
\sigma \cdot n = \big(C:\epsilon\big) \cdot n=0 \text{ on } \partial\Omega.
\end{equation}
Equations (\ref{equilib}), (\ref{compat}) and (\ref{BC}) together form an elliptic boundary value problem for $\epsilon$ based on a known eigen-strain $\epsilon^*$.

While $\sigma$ and $\epsilon$ are inherently three-dimensional in nature, there are two typical limiting assumptions on the plane that have practical utility \cite{Timoshenko1970Theory}:
\begin{enumerate}
    \item{Plane-strain conditions ($\epsilon_{i3}=0 \quad \forall i)$};
    \item{Plane-stress conditions ($\sigma_{i3}=0 \quad \forall i$)}.
\end{enumerate}
Plane-strain is a limiting case for thick prismatic samples, while plane-stress relates to thin two-dimensional samples where `thick' and `thin' refer to dimensions in the $x_3$ direction. The above analysis applies directly to both cases where $\Omega \in \mathbb{R}^2$ with the exception that, in the plane-stress case, the isotropic elasticity tensor becomes
\begin{equation}
    \label{HookePlaneStress}
    C_{ijkl}= \frac{E}{1+\nu} \Big(\frac{\nu}{1-\nu}\delta_{ij} \delta_{kl} + \frac{1}{2}\left( \delta_{ik} \delta_{jl} + \delta_{il} \delta_{jk}\right)\Big).
\end{equation}

\section{Problem statement}
We are now in a position to state precisely the inverse problem we seek to solve in this work.
\begin{problem}
    \label{theproblem}
    Given $\Omega \subset \mathbb{R}^2$ and $\epsilon \in L^2(\mathcal{S}^2;\Omega)$, where $\epsilon$ derives from a plane-stress or plane-strain state and is known to satisfy \eqref{equilib} and \eqref{BC}, we wish to recover $\epsilon$ from its LRT.
\end{problem}
The rest of the paper is focused on developing a solution to this problem and demonstrating its numerical implementation.

\section{Helmholtz decomposition of strain in $\mathbb{R}^2$}

We begin by connecting the stress $\sigma$ and strain $\epsilon$ initially defined only on the bounded set $\Omega$ to the solenoidal decomposition \eqref{decomp} on all of $\mathbb{R}^2$. Given that the stress $\sigma$ satisfies \eqref{equilib} in the classical sense (i.e. is twice differentiable) on $\Omega$ and satisfies the traction-free boundary condition \eqref{BC}, in fact $\sigma$ extended as zero outside of $\Omega$ is divergence free in the distributional sense and is therefore its own solenoidal part with no potential part if decomposed according to \eqref{decomp}. Our goal in this section is to use this fact, together with \eqref{planestess} or \eqref{planestrain} to find the solenoidal decomposition of $\epsilon$. 

This can be achieved through the concept of an Airy stress function.  In both the plane-stress and plane-strain cases, it is possible to write $\sigma$ in terms of a scalar Airy stress potential, $\psi \in H^2(\Omega)$ in such a way that it automatically satisfies equilibrium:
\begin{equation} \label{Airy1}
    \sigma=(d^\perp)^2\psi.
\end{equation}
When combined with Hooke's law (i.e. \eqref{Hooke} or \eqref{HookePlaneStress}), it follows that strain can also be written in terms of this same potential as
\begin{equation}
    \label{planestrain}
    \epsilon=\frac{1+\nu}{E}\Big((1-\nu)(d^\perp)^2-\nu d^2\Big) \psi
\end{equation}
for plane-strain conditions, or
\begin{equation}
    \label{planestess}
    \epsilon=\frac{1}{E}\Big((d^\perp)^2-\nu d^2\Big) \psi
\end{equation}
in the case of plane-stress.

Both \eqref{planestess} and \eqref{planestrain} already appear to be in the form of Helmholtz decompositions, however the issue is that the Airy stress potential appearing in \eqref{Airy1} may not satisfy equilibrium in a distributional sense when extended as zero to $\mathbb{R}^2$. The next lemma shows that when the traction-free boundary condition \eqref{BC} is satisfied, in fact there is an Airy stress potential which extends as zero.

\begin{proposition}
\label{SupportLemma}
    Suppose that $\sigma \in L^2(\mathcal{S}^2;\mathbb{R}^2)$ has support contained in a bounded and simply connected set $\Omega$ and satisfies \eqref{equilib} in the distributional sense on $\mathbb{R}^2$. Then there exists unique $\psi \in \dot{H}_0^2(\mathbb{R}^2)$ such that $\mathrm{supp}(\psi) \subset \Omega$ and
    \begin{equation}\label{sigma_pot}
    \sigma = (d^\perp)^2 \psi \quad \mbox{on $\mathbb{R}^2$.}
    \end{equation}
    Furthermore,
    \begin{equation}\label{psi_cont}
    \|\psi\|_{\dot{H}_0^2(\mathbb{R}^2)} \leq M \|\sigma\|_{L^2(\mathcal{S}^2;\mathbb{R}^2)}.
    \end{equation}
    for a constant $M>0$ which depends on $\Omega$ but not $\sigma$.
\end{proposition}

\begin{proof}
    First consider the case when $\sigma \in \mathcal{C}_c^\infty(\mathcal{S}^2;\mathbb{R}^2)$ satisfies \eqref{equilib} and has support contained in $\Omega$ which is itself inside an open ball $B_R$ of radius $R$ centred at the origin. The two columns of $\sigma$, $\sigma_{i1}$ and $\sigma_{i2}$, are divergence free vector fields on $\mathbb{R}^2$ and so the path integrals of $e_{ik3} \sigma_{ij} dx_k$ between any two points are independent of path due to Green's theorem. For $x_0 \in \partial B_R$ and any $x \in \mathbb{R}^2$, we define new functions via the path integrals
    \begin{equation}\label{path1}
    \phi_j(x) = \int_{x_0}^x e_{ik3} \sigma_{ij} dx_k
    \end{equation}
    in which the path is left unspecified. Defining the vector field $\phi = (\phi_1, \phi_2)$ it follows, due to path independence and the fundamental theorem of calculus, that
    \begin{equation} \label{dphi}
    \frac{\partial \phi_j}{\partial x_k} = e_{ik3} \sigma_{ij}.
    \end{equation}
    Additionally, since $\Omega$ is simply connected, for any $x \in \mathbb{R}^2 \setminus \Omega$ we can choose a path from $x_0$ to $x$ outside of $\Omega$ and by its path integral definition \eqref{path1}, we have $\phi(x) = 0$. Thus, we conclude that $\phi$ is also supported in $\Omega$.

    Next, from \eqref{dphi} we obtain
    \[
    \frac{\partial \phi_1}{\partial x_1} + \frac{\partial \phi_2}{\partial x_2} = \text{Div}(\phi) = 0.
    \]
    This implies as before that line integrals of $e_{ik3} \phi_i\ d x_k$ between two points are independent of path, and we define
    \[
    \psi(x) = \int_{x_0}^x e_{jl3} \phi_j \ d x_l.
    \]
    Also as before, this implies that $\psi$ is supported in $\Omega$ and
    \[
    \frac{\partial \psi}{\partial x_l} = e_{jl3} \phi_j.
    \]
    Putting together the previous construction and using path independence we see that $\psi$ is directly related to $\sigma$ by the formula
    \[
    \psi(x_1,x_2) = \iint_{\{s<x_1, \ t >x_2\}} \sigma_{12}(s,t) \ \mathrm{d}s \ \mathrm{d} t.
    \]
    Since the support $\sigma$ is bounded we can restrict the area of integration in the previous integrals to bounded rectangles, and then use the Cauchy-Schwartz inequality to prove \eqref{psi_cont} where the constant $M$ depends only on the size of $\Omega$.

    We have now proved the proposition for the case when $\sigma$ is smooth. For $\sigma \in L^2(\mathcal{S}^2;\mathbb{R}^2)$ we approximate by a sequence $\sigma_j \in \mathcal{C}_c^\infty(\mathcal{S}^2;\mathbb{R}^2)$ of divergence free fields such that $\sigma_j \rightarrow \sigma$ in $L^2(\mathcal{S}^2;\mathbb{R}^2)$ and each $\sigma_j$ is supported within a domain with its boundary within a distance of $2^{-j}$ from $\partial\Omega$. By \eqref{psi_cont} the corresponding potentials $\psi_j$ also converge in $H^2(\mathbb{R}^2)$ to a function $\psi$ and by continuity of the derivatives from $H^2$ to $L^2$ we see that \eqref{sigma_pot} also holds. The supports of the potentials will also shrink to $\Omega$ and so we see that the support of $\psi$ is contained in $\Omega$.

    Finally, note that from \eqref{sigma_pot} the potential $\psi \in H^2(\mathbb{R}^2)$ satisfies the biharmonic equation
    \[
    \Delta^2 \psi = (\text{Div}^\perp)^2 \sigma.
    \]
    This equation has a unique solution in $H^2(\mathbb{R}^2)$ and so the proof is complete.
\end{proof}

\noindent
From Lemma \ref{SupportLemma}, we can conclude the following:

If a two dimensional residual elastic strain field on the simply connected bounded domain $\Omega$ exists in the absence of boundary traction, its extension by zero to all of $\mathbb{R}^2$ has a unique Helmholtz decomposition of the form
\begin{equation}
    \label{StrainDecomp}
    \epsilon=d\omega + {^s\epsilon}
\end{equation}
where ${^s\epsilon}$ and $d\omega$ are compactly supported within $\Omega$. Note that we only assume that the support of $\epsilon$ is contained within the simply connected set $
\Omega$, not that the support of $\epsilon$ is itself simply connected. By uniqueness and comparison to \eqref{planestrain} and \eqref{planestess}, this decomposition can be written in terms of the Airy stress potential as
\begin{align}
    \omega&=-\frac{\nu(1+\nu)}{E}d\psi \\
    \label{PStrainAsStress}
    ^s\epsilon&=\frac{1-\nu^2}{E}(d^\perp)^2\psi,
\end{align}
in the case of plane-strain, or
\begin{align}
    \omega&=-\frac{\nu}{E}d\psi \\
    \label{PStressAsStress}
    ^s\epsilon&=\frac{1}{E}(d^\perp)^2\psi,
\end{align}
for plane-stress.  Note that in each case $^s\epsilon$ is proportional to $\sigma$.

From this decomposition and the inversion formula for $^s\epsilon$ we now seek to recover the full elastic strain tensor over a sample.  Before we approach this task, we provide a brief comment on recent experimental work in this area.

\section{Isotropic strain and scalar Filtered Back Projection}
\label{sec6}

Some recent work in Bragg-edge strain tomography has approached this problem through an assumption that strain is isotropic at all points within the sample; i.e. $\epsilon=\bar\epsilon \hspace{0.3ex} \text{\bf{I}}$ for some scalar mean strain $\bar\epsilon$.  This assumption is plainly false in almost all cases; the only hydrostatic stress field (and hence strain field) that satisfies equilibrium is constant for all $x$.  However, the assumption does allow for a direct means of reconstruction by standard scalar FBP since $I\epsilon=\mathcal{R}\bar\epsilon$ for this case.

For example, in Busi \textit{et al} \cite{busi2022bragg} the authors perform a slice-by-slice FBP to recover an assumed isotropic strain within an additively manufactured stainless steel cube from a set of 19 Bragg-edge strain images.  Similarly, Zhu \textit{et al} \cite{zhu2023bragg} recover an assumed scalar isotropic strain in a laser welded steel sample using a similar technique.  

Clearly the assumption of isotropic strain was invalid in both cases, however the question remains: What has been recovered?  How does the scalar FBP of the LRT relate to the strain field within the sample?

To answer this question, we examine the trace of the solenoidal component of elastic strain in (\ref{TFBP}) to obtain the following (note that $|\xi|=1$);
\begin{align*}
{^s\epsilon}_{kk} 
	&=\frac{1}{4 \pi} \mathcal{R}^*\xi_k\xi_k \Lambda I \epsilon \\
	&= \frac{1}{4 \pi}\mathcal{R}^*\Lambda I \epsilon.
\end{align*}
Hence the recovered scalar field stemming from an isotropic assumption is precisely the trace of the (in-plane) solenoidal component, and in general there are no further conclusions that can be made.  

However, if the strain field is inherently two-dimensional, we can extend this result by considering stress in terms of the Airy potential. As before, under plane-stress or plane-strain conditions, $^s\epsilon$ can be interpreted through the natural Helmholtz decompositions (\ref{PStrainAsStress}) and (\ref{PStressAsStress}).  From this perspective, it follows that for plane-strain
\begin{equation}
    \frac{1}{4\pi}\mathcal{R}^*\Lambda I \epsilon=\frac{1-\nu^2}{E}\sigma_{kk},
\end{equation}
and for plane-stress
\begin{equation}
    \frac{1}{4\pi}\mathcal{R}^*\Lambda I \epsilon=\frac{1}{E}\sigma_{kk}.
\end{equation}

\section{Recovery of $\epsilon$ from $^s\epsilon$}

We now turn our attention to the problem of recovering $\epsilon$ from $^s\epsilon$ using the constraints provided by elasticity theory.  To this end, we present three approaches to the solution of Problem \ref{theproblem}.

\subsection{Recovery of $\epsilon$ from compatibility}

Applying the Saint-Venant operator to \eqref{StrainDecomp} implies $W(\epsilon)=-W(\epsilon^*)=W({^s}\epsilon)$ and we can replace the compatibility relation (\ref{compat}) to form a boundary value problem for $\epsilon$;
\begin{equation}
\label{equilibrium}
\begin{cases}
\text{Div}(C:\epsilon)=0 & \text{(Equilibrium)} \\
W(\epsilon)=W({^s\epsilon}) & \text{(Compatibility)}\\
(C:\epsilon) n =0 \text{ on } \partial\Omega &\text{(Boundary condition)}
\end{cases}
\end{equation}

Under two-dimensional plane-stress or plane-strain conditions we can satisfy equilibrium via \eqref{planestrain} or \eqref{planestess}, and the compatibility condition becomes a non-homogeneous bi-harmonic equation
\begin{equation}
\Delta^2 \psi = \frac{\partial^4\psi}{\partial x_1^4}+\frac{\partial^4\psi}{\partial x_2^4}+2\frac{\partial^4\psi}{\partial x_1^2 \partial x_2^2} = E(\nabla^\perp)^T {^s\epsilon} \nabla^\perp,
\label{biharmonic}
\end{equation}
subject to the boundary condition
\begin{equation}
    (d^\perp)^2\psi \cdot n = 0 \text{ on } \partial\Omega.
\end{equation}
Potentially this provides a direct approach to recover $\psi$ and hence $\epsilon$ through numerical solution.  However, it should be recognised that computing the right hand side of \eqref{biharmonic} involves taking second order numerical derivatives.  In the presence of experimental uncertainty, this is likely to be a very unstable process.

\subsection{Recovery of the potential component}

An alternate approach involves the recovery of the potential part of $\epsilon$ using equilibrium.  From (\ref{StrainDecomp}) and (\ref{equilibrium}), the equilibrium of the system implies
\begin{align}
\text{Div}\big(C:(d\omega+{^s\epsilon})\big)&=0,
\end{align}
which leads to an elliptic boundary value problem for $\omega$ of the form
\begin{align}
\label{OmegaEqu}
\text{Div}(C:d\omega)=b \\
\label{OmegaBC}
\omega = 0 \text{ on } \partial \Omega
\end{align}
where $b=-\text{Div}(C:{{^s}\epsilon})$.

This is in the form of a standard structural elasticity problem for $\omega$ as a displacement field resulting from a distributed body force and trivial Dirichlet boundary condition. For 2D plane-stress conditions
\begin{align}
\label{bx}
b_1&=-\frac{E}{1-\nu^2}\Big(\frac{\partial{{^s}\epsilon_{11}}}{\partial x_1} + \nu \frac{\partial{{^s}\epsilon_{22}}}{\partial x_1} + (1-\nu) \frac{\partial{{^s}\epsilon_{12}}}{\partial x_2}\Big), \\
b_2&=-\frac{E}{1-\nu^2}\Big(\nu\frac{\partial{{^s}\epsilon_{11}}}{\partial x_2} + \frac{\partial{{^s}\epsilon_{22}}}{\partial x_2} + (1-\nu) \frac{\partial{{^s}\epsilon_{12}}}{\partial x_1}\Big).
\label{by}
\end{align}
In contrast to the previous approach, calculation of $b$ only involves computing first derivatives, and hence is potentially a much more stable process.

\subsection{Recovery of $\epsilon$ from Hooke's law}

By far the most direct means for recovering $\epsilon$ from $^s\epsilon$ is through Hooke's law. From Proposition \ref{SupportLemma}, we have that $\sigma=(d^\perp)^2\psi$ for some Airy stress function $\psi$, and therefore using \eqref{PStrainAsStress} and \eqref{PStressAsStress} together with \eqref{planestrain} and \eqref{planestess}, we can write
\begin{align}
    \epsilon_{11}&={^s\epsilon}_{11}+\frac{\nu}{1-\nu}{^s\epsilon}_{22} \\ 
    \epsilon_{22}&={^s\epsilon}_{22}+\frac{\nu}{1-\nu}{^s\epsilon}_{11} \\
    \epsilon_{12}&=\frac{1}{1-\nu}{^s\epsilon}_{12}
\end{align}
for plane-strain, or
\begin{align}
    \label{HookeRecon1}
    \epsilon_{11}&={^s\epsilon}_{11}-\nu{^s\epsilon}_{22} \\ 
    \label{HookeRecon2}
    \epsilon_{22}&={^s\epsilon}_{22}-\nu{^s\epsilon}_{11} \\
    \label{HookeRecon3}
    \epsilon_{12}&=(1+\nu){^s\epsilon}_{12}
\end{align}
for plane-stress conditions.

\section{Numerical demonstration: Simulated data}

\subsection{Strain fields}

Numerical demonstrations of the above process were performed on three synthetic two-dimensional plane-stress strain fields. The first of these fields was generated over the unit disk from an Airy stress potential of the form
\begin{equation}
\psi=e^{-\alpha((x+1/4)^2+y^2)}-e^{-\alpha((x-1/4)^2+y^2)},
\end{equation}
with $\alpha=15$, and elastic properties $E=1$ and $\nu=0.34$.  The three independent components of this strain field are shown in Figure \ref{AiryField}a.

\begin{figure} [h!]
\begin{center}
    \includegraphics[width=0.24\linewidth]{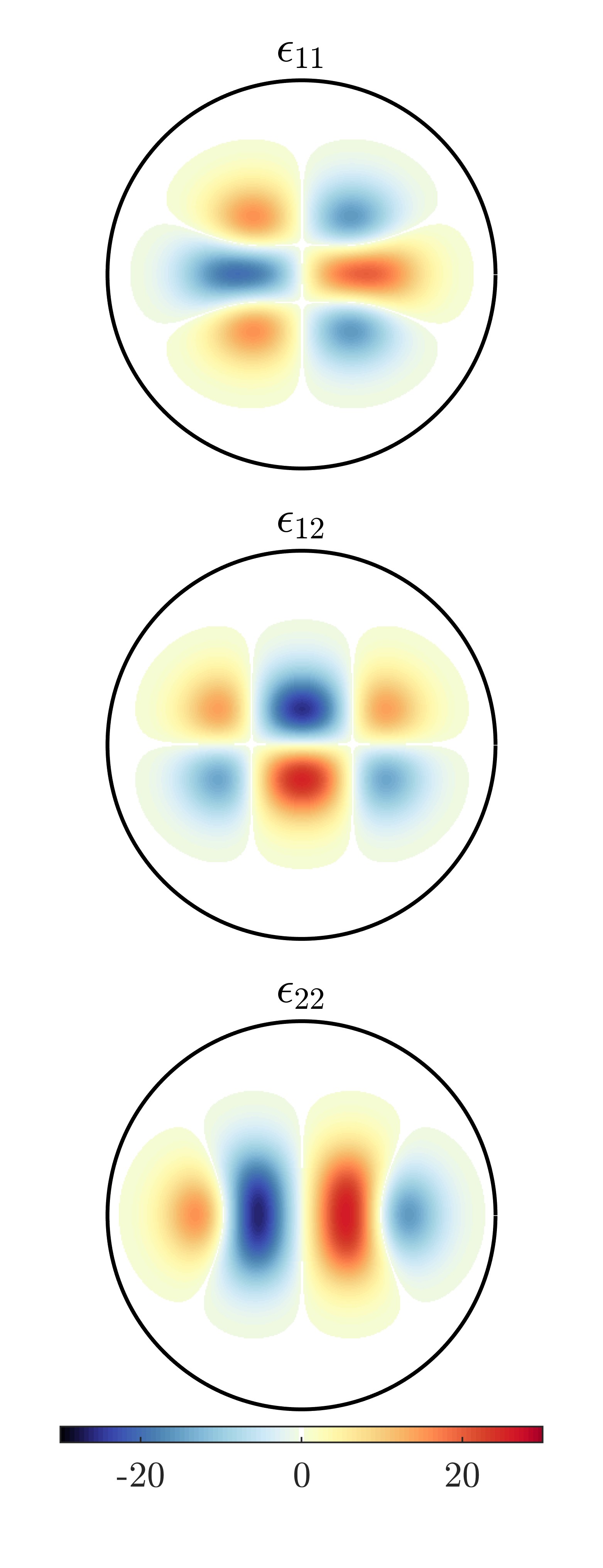}
    \includegraphics[width=0.24\linewidth]{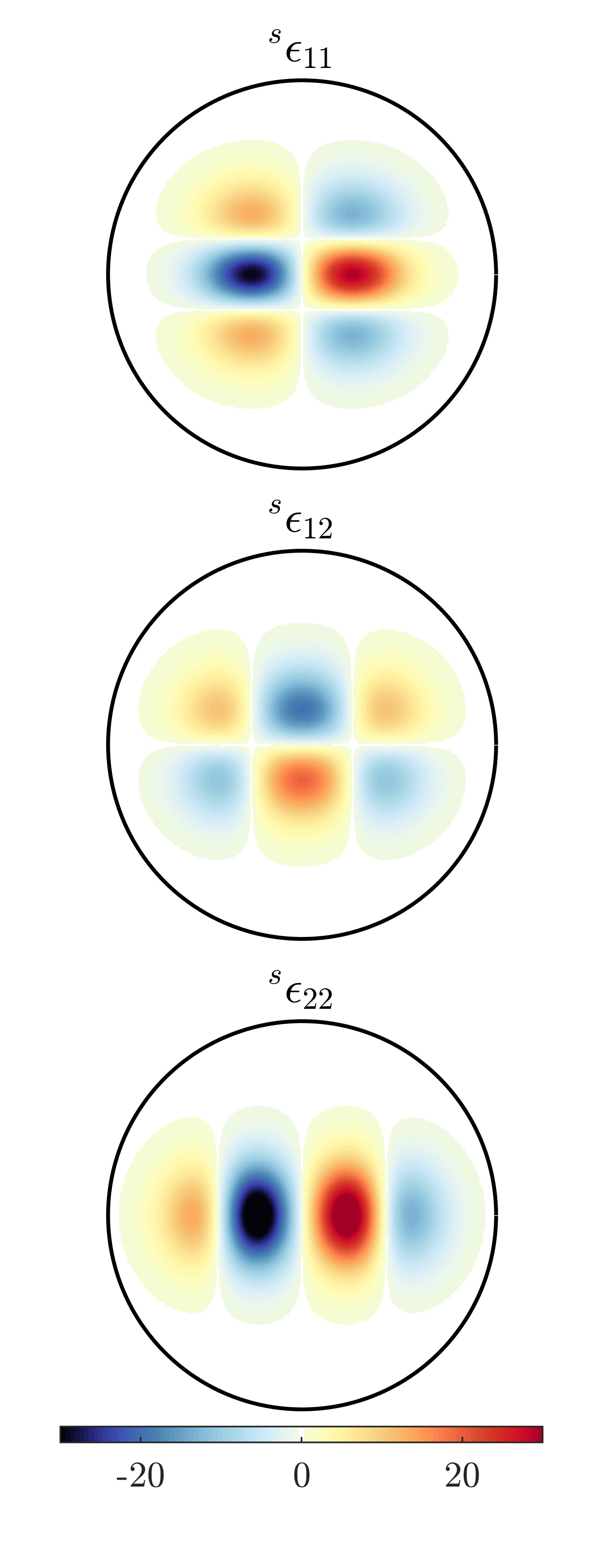}
    \includegraphics[width=0.24\linewidth]{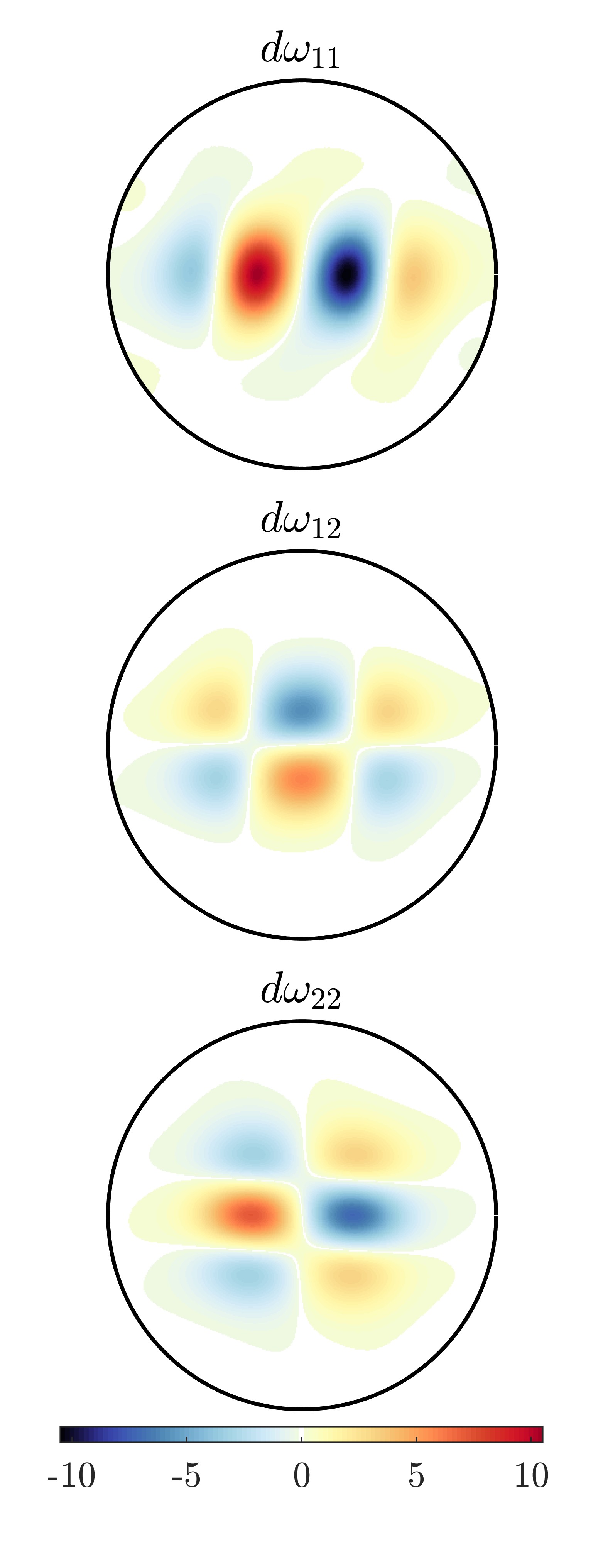}
    \includegraphics[width=0.24\linewidth]{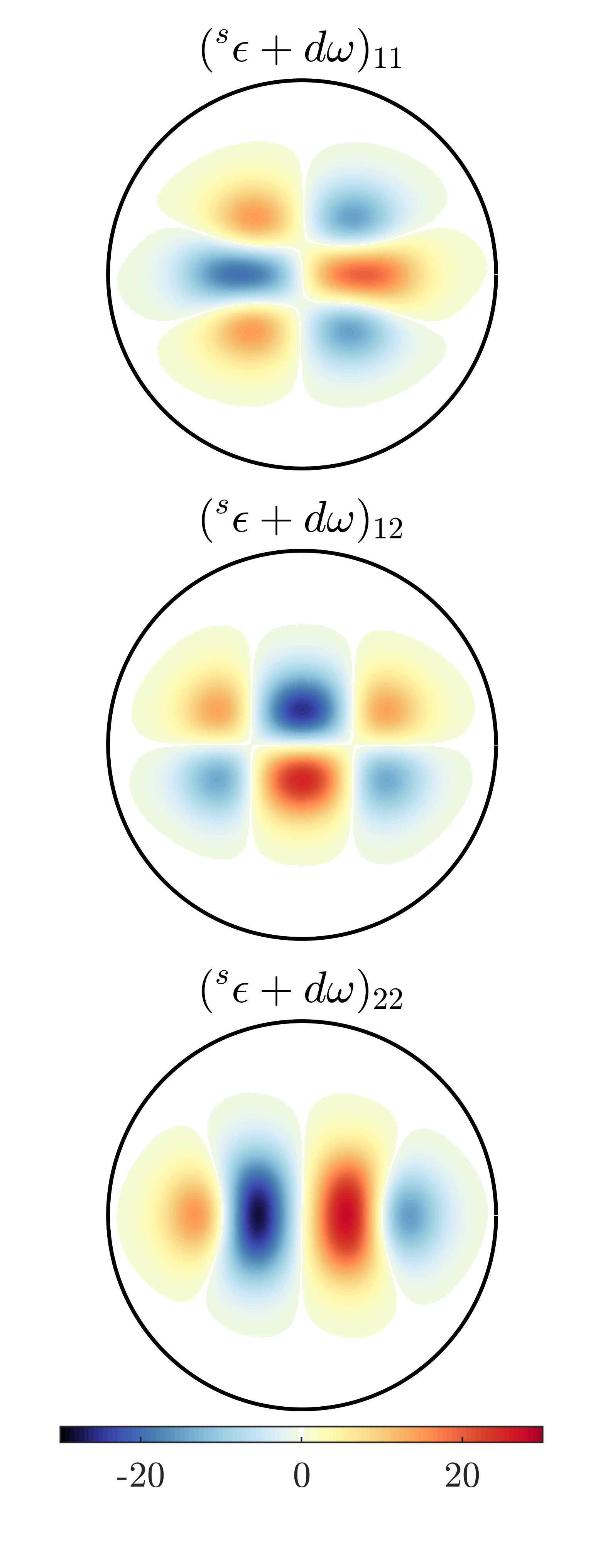}
    \put(-380,230){(a)}
    \put(-282.5,230){(b)}
    \put(-183.5,230){(c)}
    \put(-85,230){(d)}
    \caption{A reconstruction of a synthetic strain field computed from an Airy stress field. (a) The original strain field. (b) A reconstruction of the solenoidal component of this field from a simulated LRT consisting of 200 equally spaced projections over 360$^\circ$. (c) The recovered potential component from elastic finite element modelling. (d) The reconstructed strain field formed by the sum of the solenoidal and potential components.}
    \label{AiryField}
\end{center}
\end{figure} 

The second and third fields corresponded to finite element simulations of physical samples that were the focus of prior experimental work \cite{gregg2018tomographic}.  All relevant details can be found in the reference, however a brief description of each sample is as follows;
\begin{enumerate}
\item{\emph{Crushed Ring}: A sample formed by plastically deforming an initially stress-free steel ring along its diameter.  The geometry of the sample and applied deformation is shown in Figure \ref{Samples}a.  The residual strain field in this sample originates from a distributed eigen-strain related to plastic deformation (see Figure \ref{CRFEA}a)}
\item{\emph{Offset Ring-and-Plug}: A cylindrical steel sample constructed by shrink-fitting an oversize cylindrical `plug' into an undersize hole that is offset from the centreline (see Figure \ref{Samples}b). The strain field within this sample originates from the interference between the offset ring and the plug (see Figure \ref{RPFEA}a).  In the context of (\ref{strain_decomp}), the interference imposes a discrete eigen-strain with localised support on the interface.}
\end{enumerate}

\begin{figure} [hbt]
\begin{center}
    \includegraphics[width=0.6\linewidth]{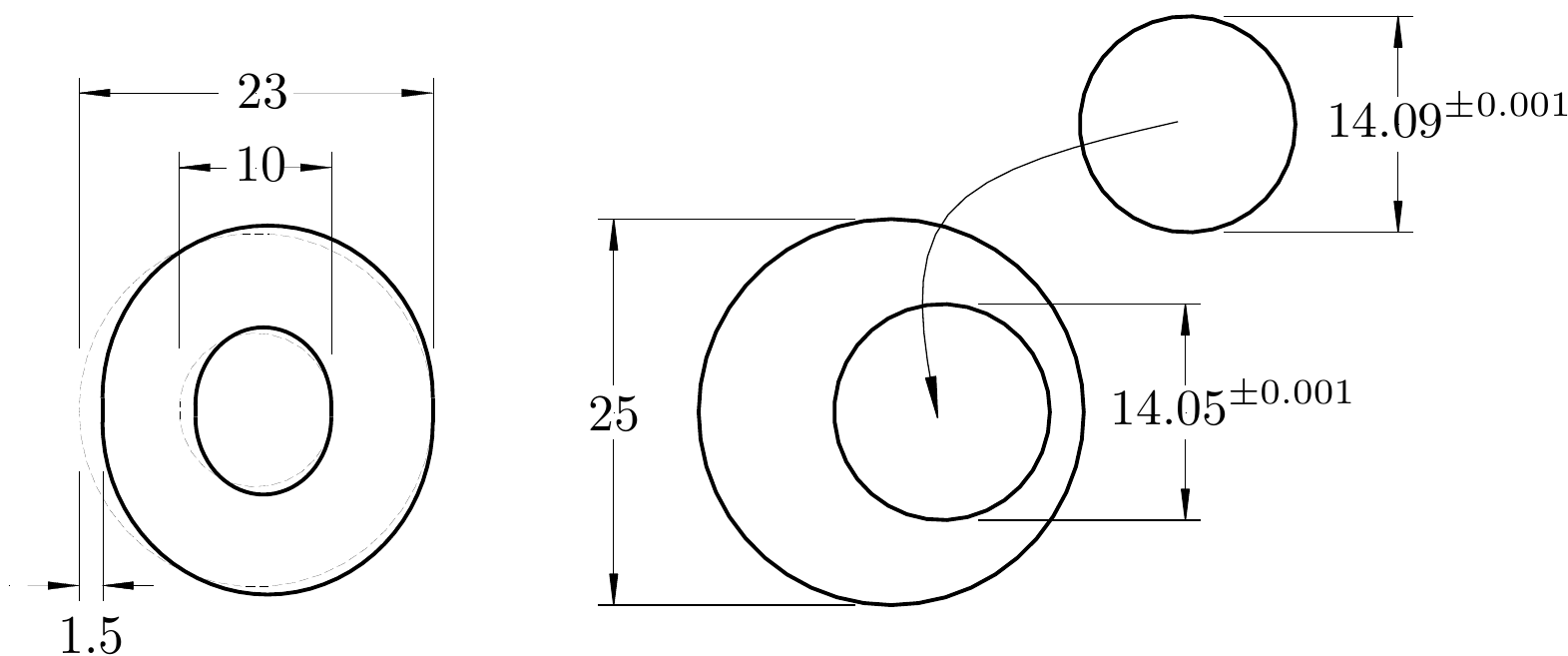}
\put(-260,80){(a) \hspace{17ex} (b)}\\
    \caption{Two samples representing strain fields used to perform numerical demonstrations of the reconstruction algorithm. (a) A crushed steel ring containing a distributed eigen-strain field. (b) An offset ring and plug system containing a discrete eigen-strain field generated through mechanical interference.}
\label{Samples}
\end{center}
\end{figure}

\begin{figure}[htb]
\begin{center}
    \includegraphics[width=0.24\linewidth]{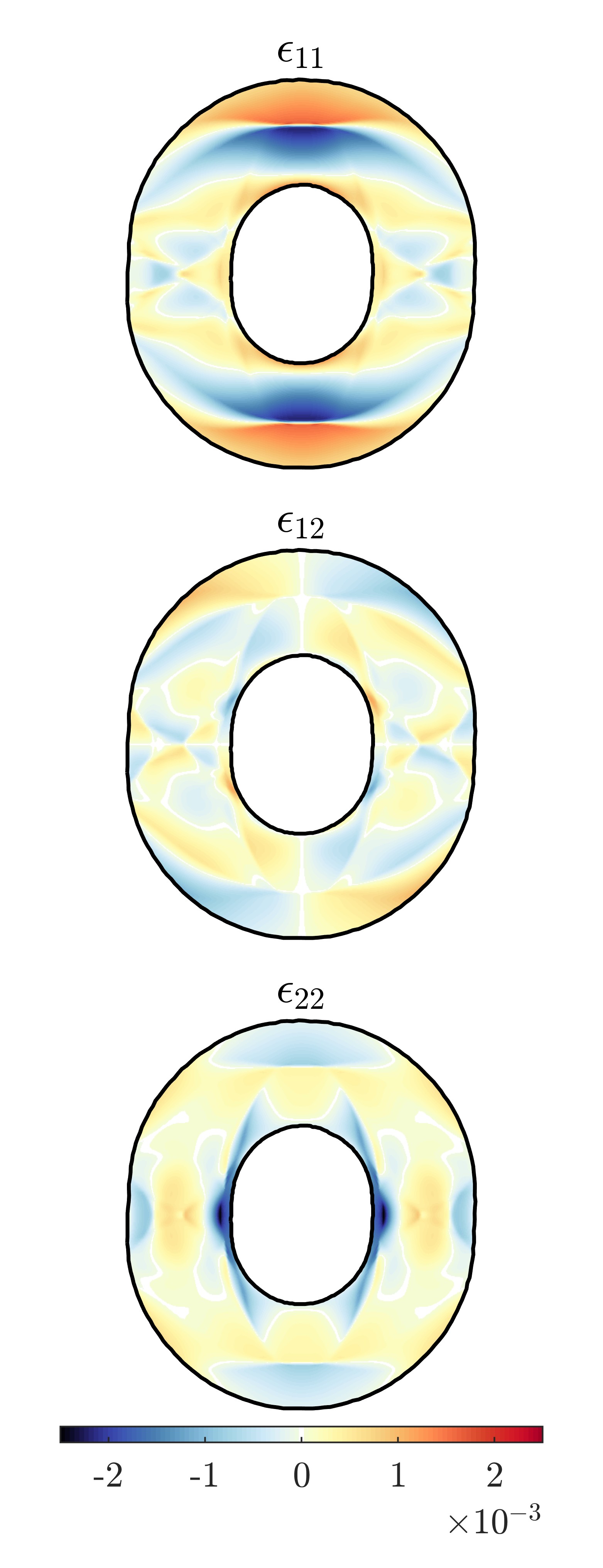}
    \includegraphics[width=0.24\linewidth]{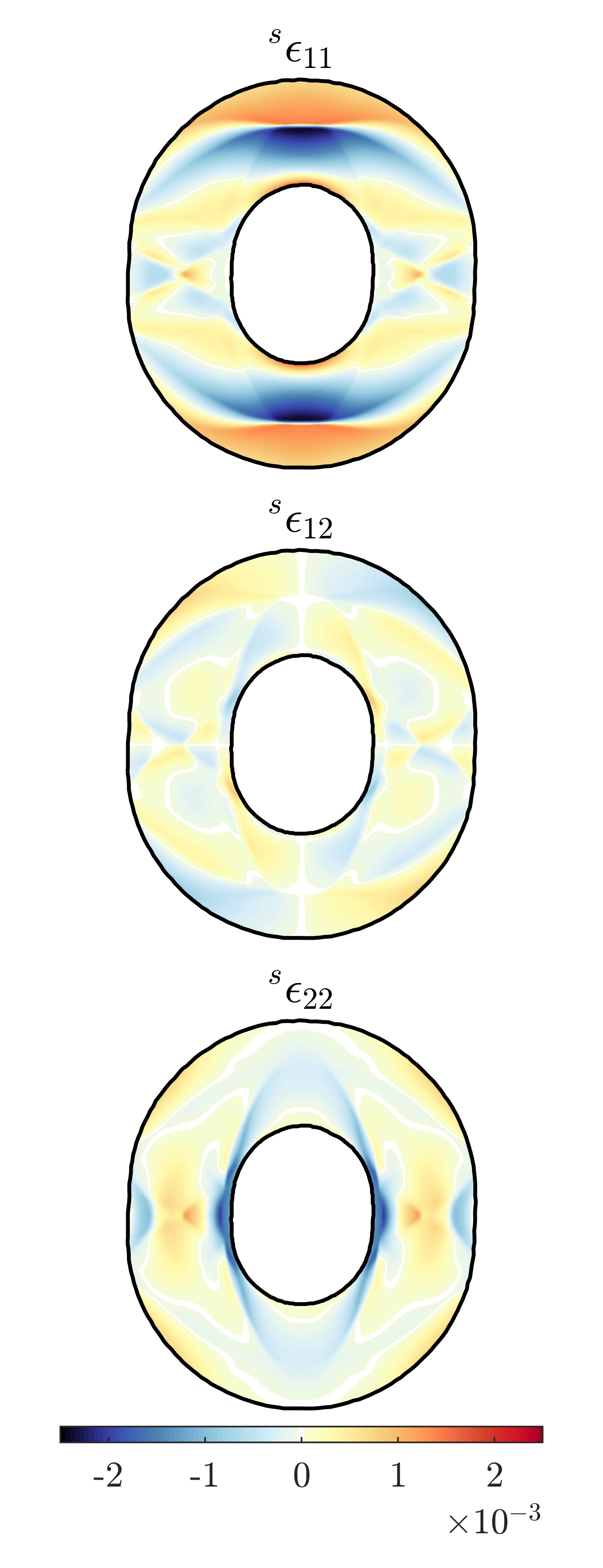}
    \includegraphics[width=0.24\linewidth]{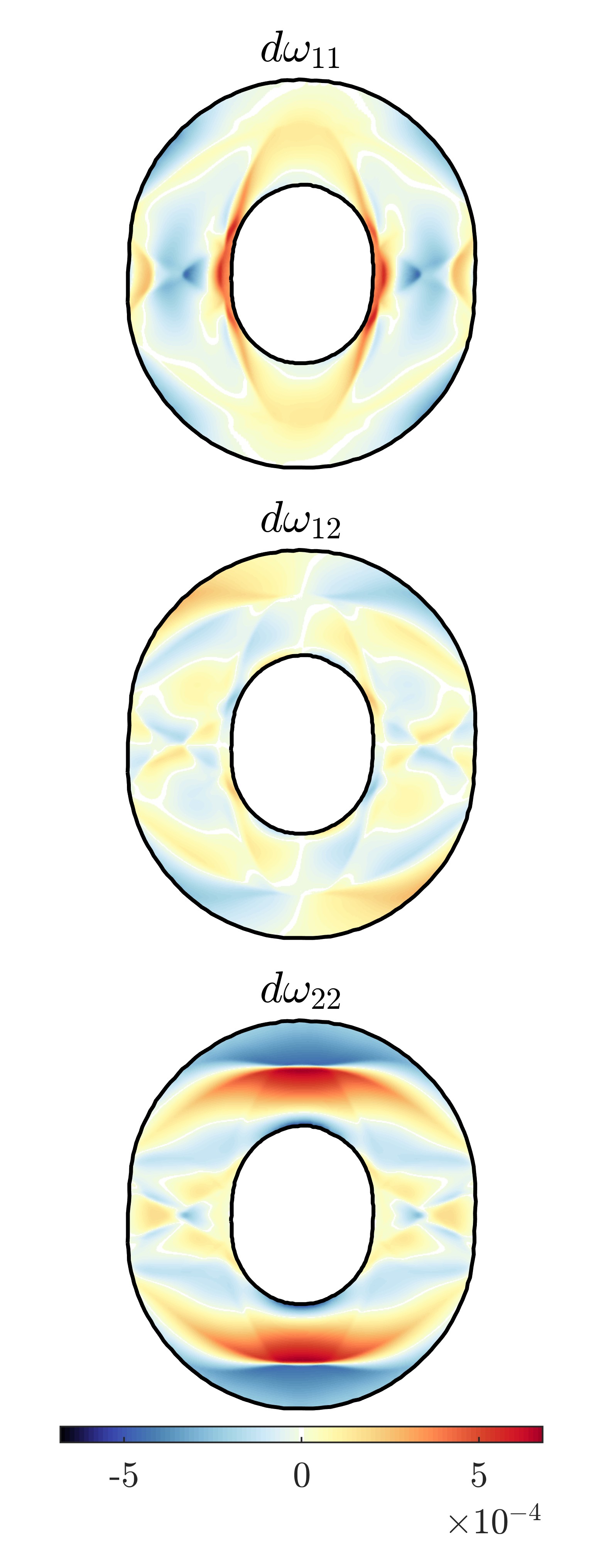}
    \includegraphics[width=0.24\linewidth]{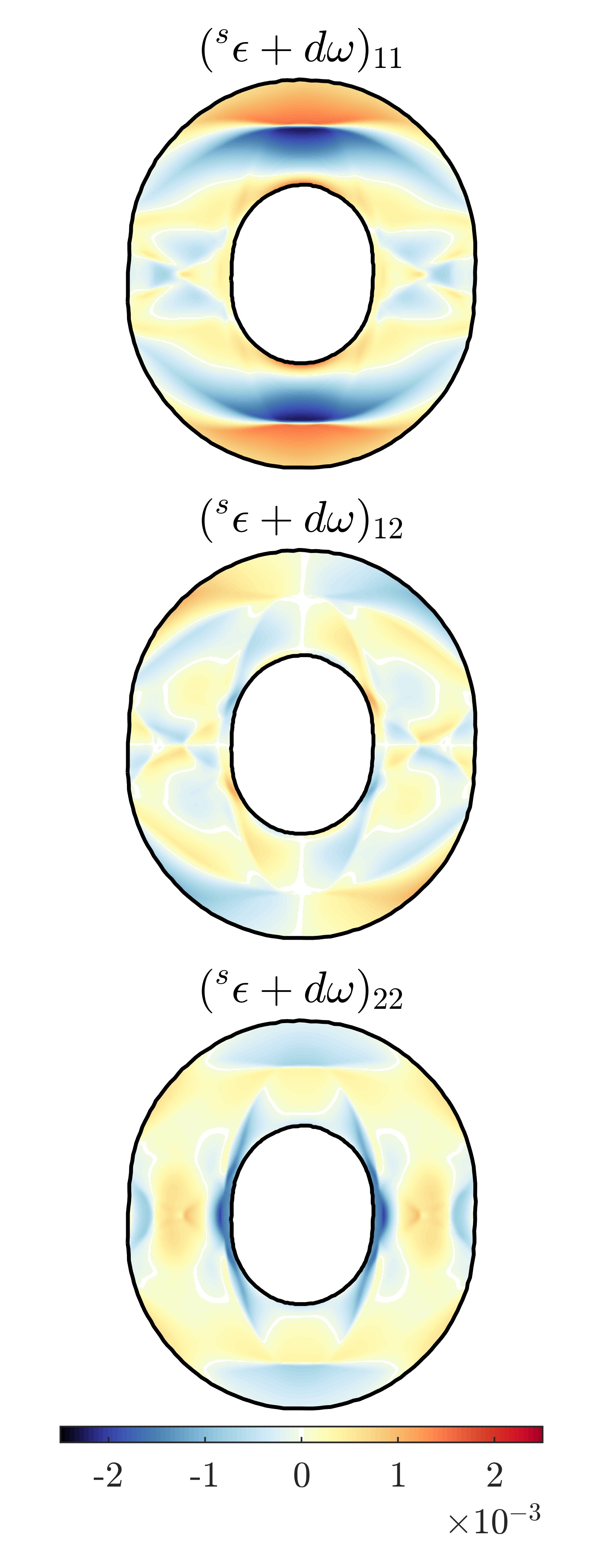}
    \put(-380,230){(a)}
    \put(-282.5,230){(b)}
    \put(-183.5,230){(c)}
    \put(-85,230){(d)}
    \caption{A reconstruction of a synthetic strain field computed from an elasto-plastic finite element model of the crushed ring. (a) The original strain field. (b) A reconstructed of the solenoidal component of this field from a simulated LRT consisting of 200 equally spaced projections over 360$^\circ$. (c) The recovered potential component from elastic finite element modelling. (d) The reconstructed strain field formed by the sum of the solenoidal and potential components.}
    \label{CRFEA}
\end{center}
\end{figure} 

\begin{figure}[htb]
\begin{center}
    \includegraphics[width=0.24\linewidth]{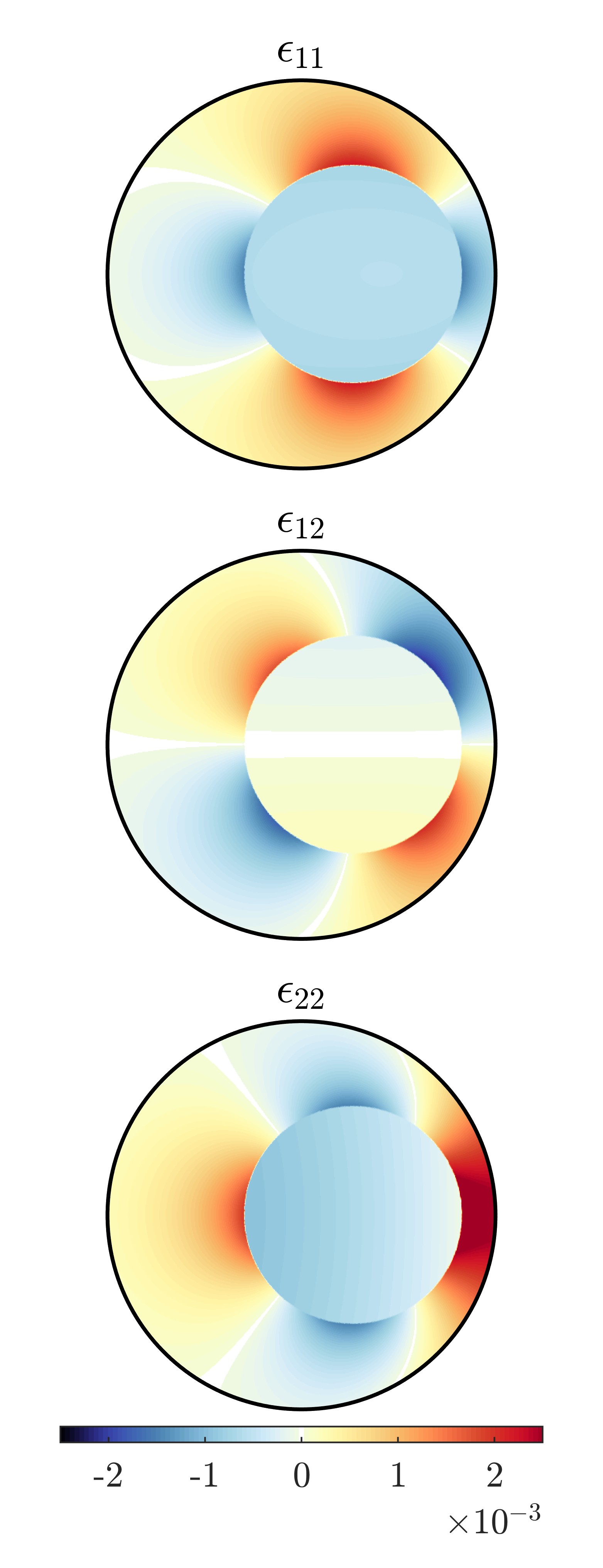}
    \includegraphics[width=0.24\linewidth]{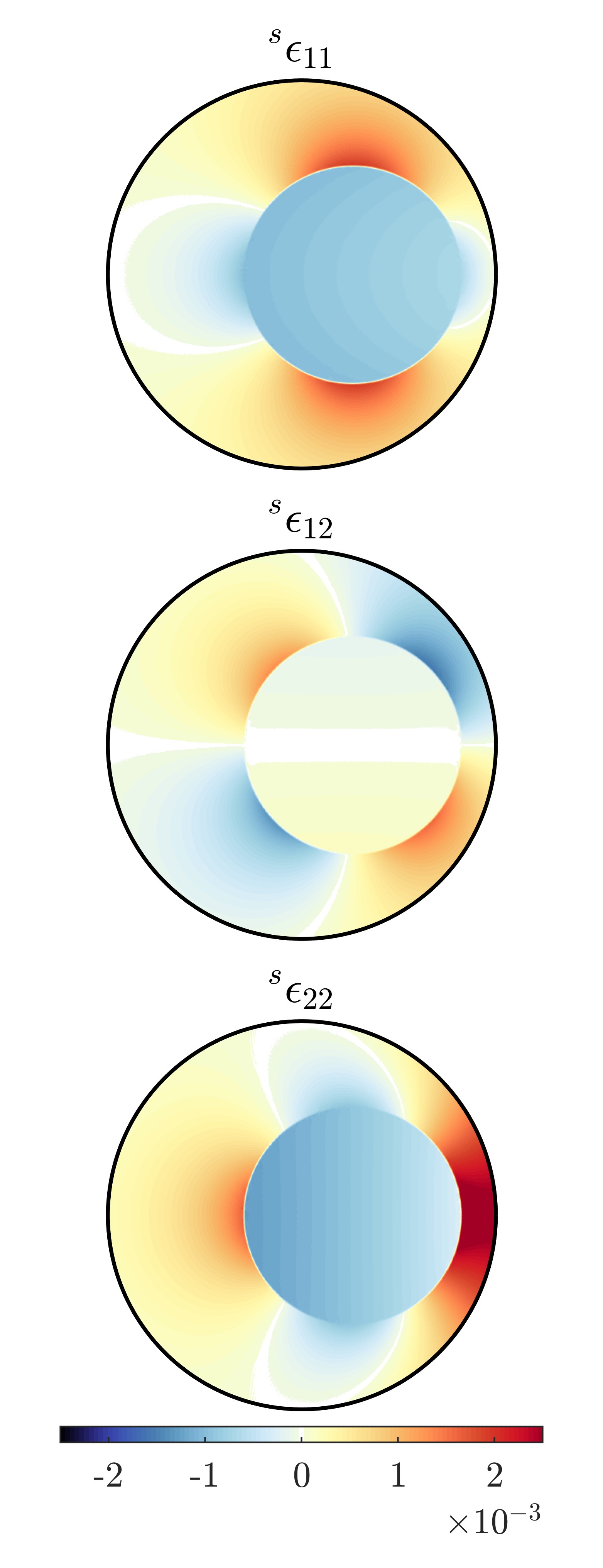}
    \includegraphics[width=0.24\linewidth]{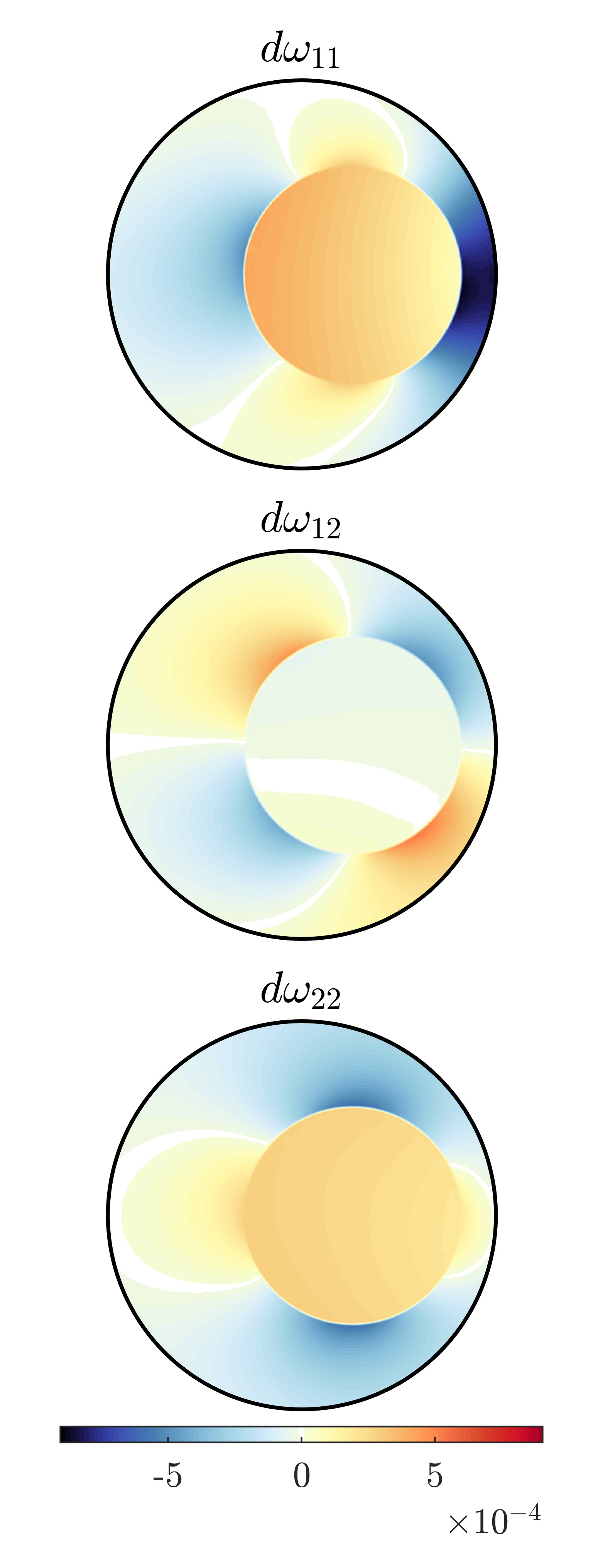}
    \includegraphics[width=0.24\linewidth]{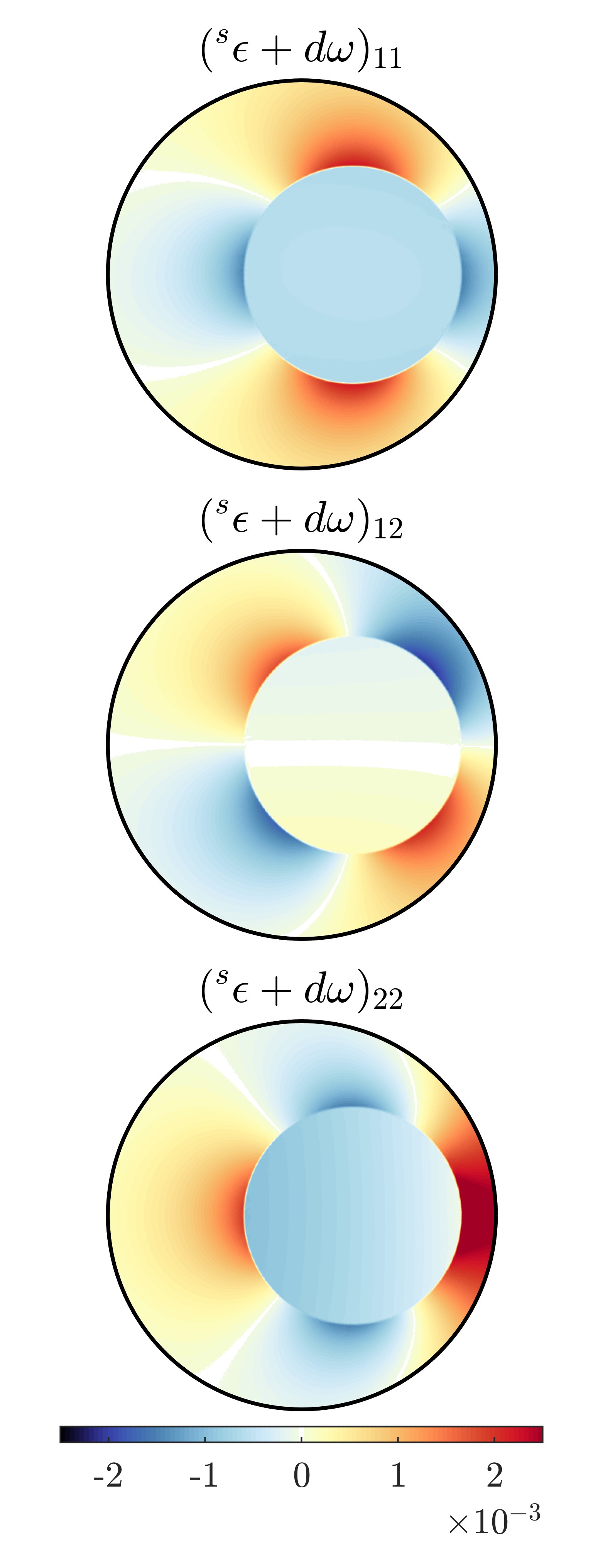}
    \put(-380,230){(a)}
    \put(-282.5,230){(b)}
    \put(-183.5,230){(c)}
    \put(-85,230){(d)}
    \caption{A reconstruction of a synthetic strain field computed from an linear-elastic finite element model of the offset ring and plug system. (a) The original strain field. (b) A reconstructed solenoidal component of this field from a simulated LRT consisting of 200 equally spaced projections over 360$^\circ$. (c) The recovered potential component from elastic finite element modelling. (d) The reconstructed strain field formed by the sum of the solenoidal and potential components.}
    \label{RPFEA}
\end{center}
\end{figure} 

Both samples were 14mm thick and were simulated as steel with $E=209$GPa, $\nu=0.34$ and a yield stress of 650MPa.  The finite element model for the first sample required a non-linear solve based on an elasto-plastic material model, while the second sample was modelled using linear-elasticity.  Both models were built and solved in the software package PTC/Creo.

All three strain fields were represented as three scalar components mapped to regular two-dimensional grids.  The size and resolutions of these grids were as follows: Airy -- $400\times400$, spacing 0.006, Crushed Ring -- $500\times500$, spacing 48$\mu$m, Ring and Plug -- $521\times521$, spacing 50$\mu$m.  In each case, all three strain components were extended by zero outside the sample boundaries.

What follows is a demonstration of the reconstruction of these fields from synthetic LRT data.  

\subsection{Procedure}

The demonstrations were was carried out with the help of the Matlab `\texttt{radon}' and `\texttt{iradon}' functions.  In this context, the implementation was as defined in the following process:

\begin{enumerate}
\item{Forward map the LRT of the strain field by successive application of the `\texttt{radon}' Matlab function for each individual projection angle.  i.e. for a given projection angle $\theta$:
\[
I\epsilon(s,\theta)=\mathcal{R}[\cos^2\theta\epsilon_{11} +2 \cos\theta\sin\theta\epsilon_{12}+\sin^2\theta\epsilon_{22}]
\]}
\item{Component-wise back-project the resulting strain-sinogram to compute the three unique components of $^s\epsilon$ using the FBP algorithm as implemented in the `\texttt{iradon}' intrinsic Matlab function (as per \eqref{InvLRT}).}
\item{Calculate a first reconstruction of $\epsilon$ from $^s\epsilon$ based on Hooke's law using \eqref{HookeRecon1}, \eqref{HookeRecon2} and \eqref{HookeRecon3}.}
\item{Calculate derivatives of $^s\epsilon$ by first transforming the individual components to the Fourier domain using the `\texttt{fft2}' and `\texttt{fftshift}' intrinsic Matlab functions.  These transformed components are then multiplied by appropriate $\kappa$-space filters corresponding to $\partial/\partial x_1$ and $\partial/\partial x_2$ before transforming back to the real domain using `\texttt{fftshift}' and `\texttt{ifft2}'}
\item{From these derivatives, calculate the two components of the vector $b$ using \eqref{bx} and \eqref{by}.}
\item\label{FEAmodel}{Using the Matlab PDE solver, calculate a finite element solution for the displacement field $\omega$ satisfying (\ref{OmegaEqu}) and (\ref{OmegaBC}) subject to the calculated vector field $b$.}
\item{Calculate a second reconstruction for $\epsilon$ as the sum $\epsilon={^s\epsilon}+d\omega$, where $d\omega$ is computed from the shape functions within the finite element solution.}
\end{enumerate}

The target element size for the finite element model in step \ref{FEAmodel} was set to be 0.5\% of the maximum sample dimensions.  This was conservatively chosen through a standard mesh-independence investigation.

\subsection{Results}

In all three cases the reconstructions based on Hooke's law and the finite element recovery of the potential component were visually indistinguishable from each other. However, the reconstruction based on Hooke's law was slightly more accurate in terms of a root-mean-square error.  

Figures \ref{AiryField}, \ref{CRFEA} and \ref{RPFEA} show the results of this process based on simulated LRT data from 200 equally spaced angular projections over 360$^\circ$.  Each figure shows the original strain field together with the reconstructed solenoidal component, the recovered potential component, and the final reconstruction based on the sum of the two.

 It was also interesting to note that, in each case, the reconstructed solenoidal component was approximately zero outside the sample boundary (as expected from Lemma \ref{SupportLemma}).  This is examined further in Section \ref{Support} below.

\begin{figure}[h]
\begin{center}
    \includegraphics[width=0.32\linewidth]{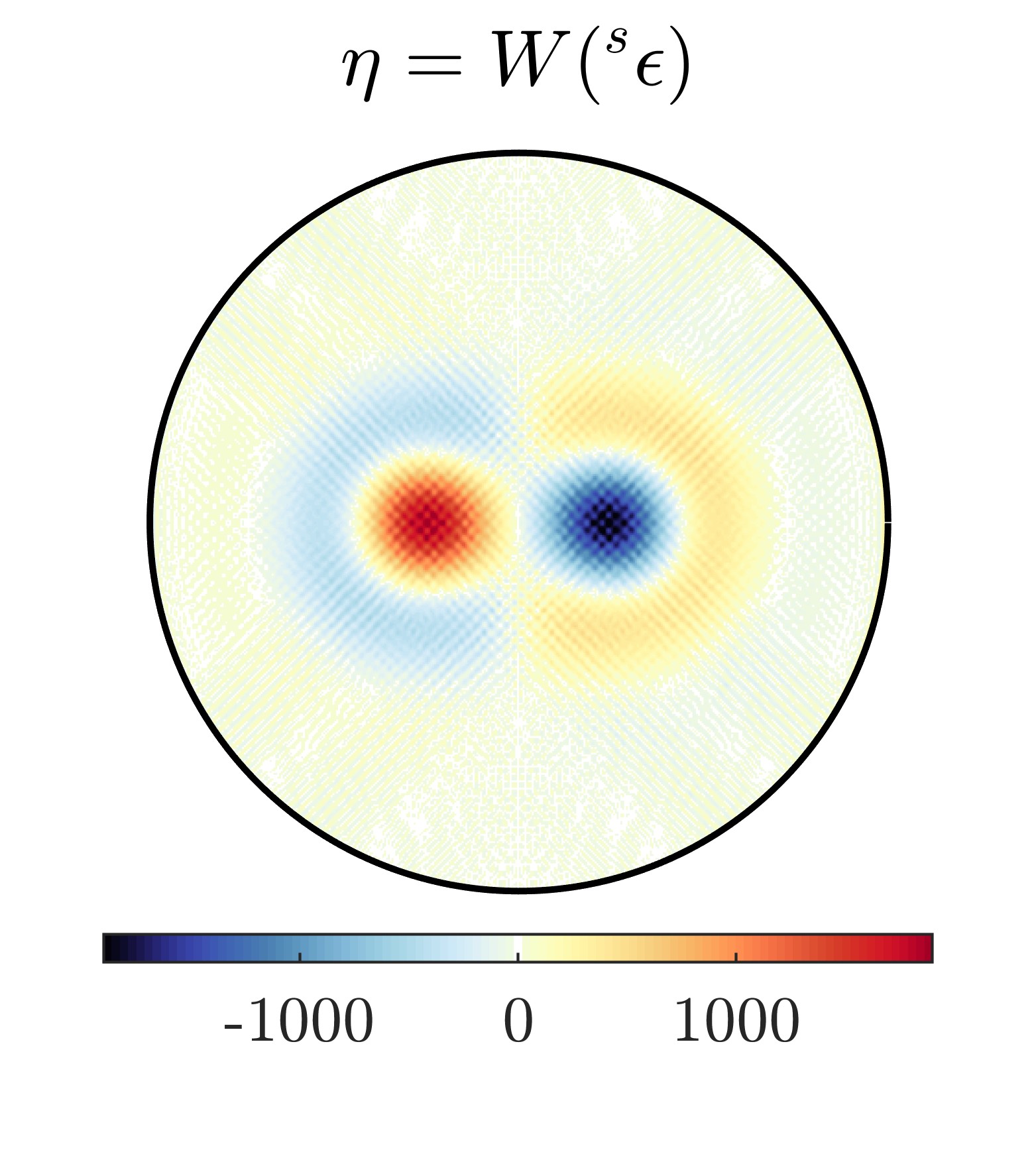}
    \includegraphics[width=0.32\linewidth]{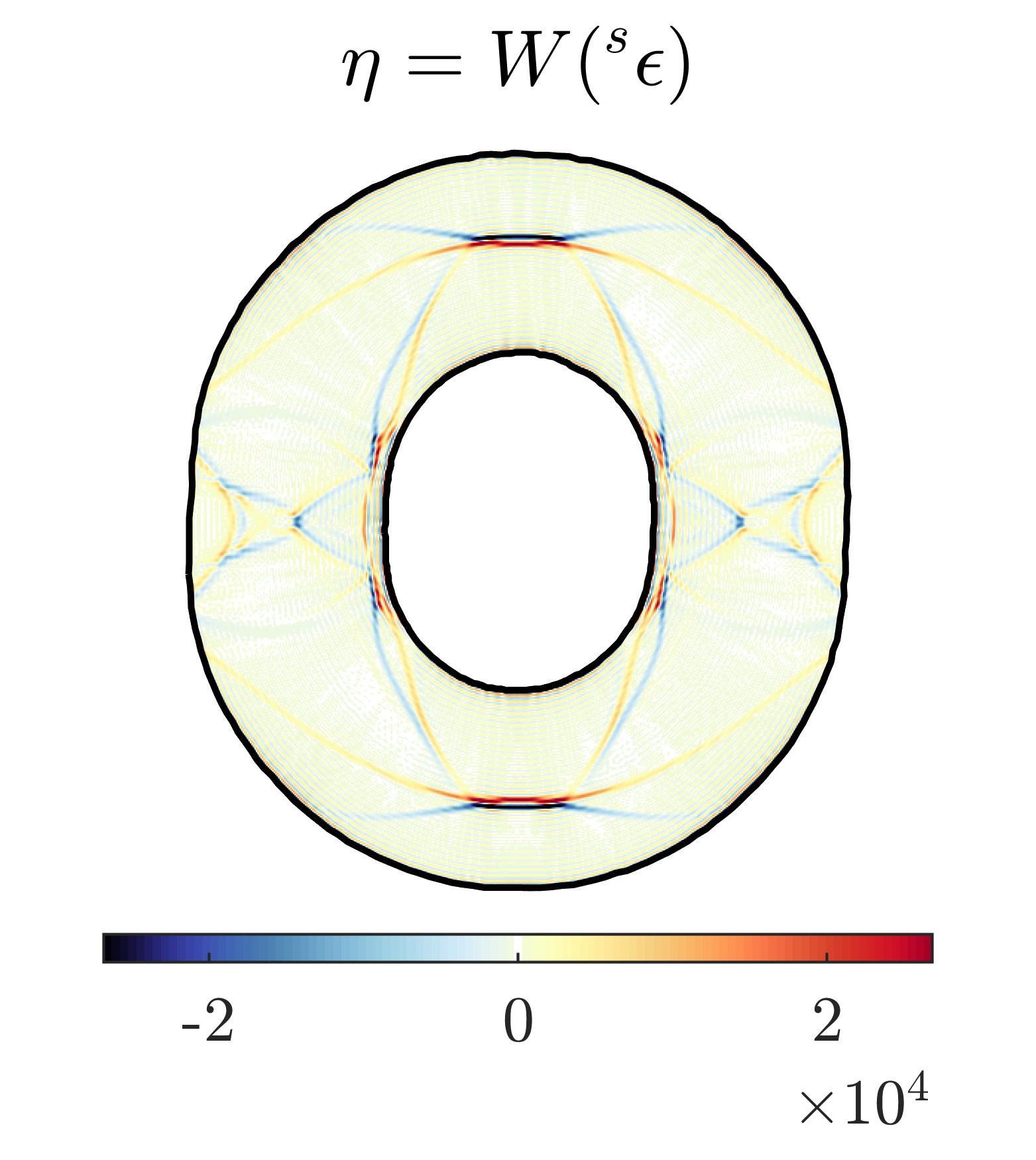}
    \includegraphics[width=0.32\linewidth]{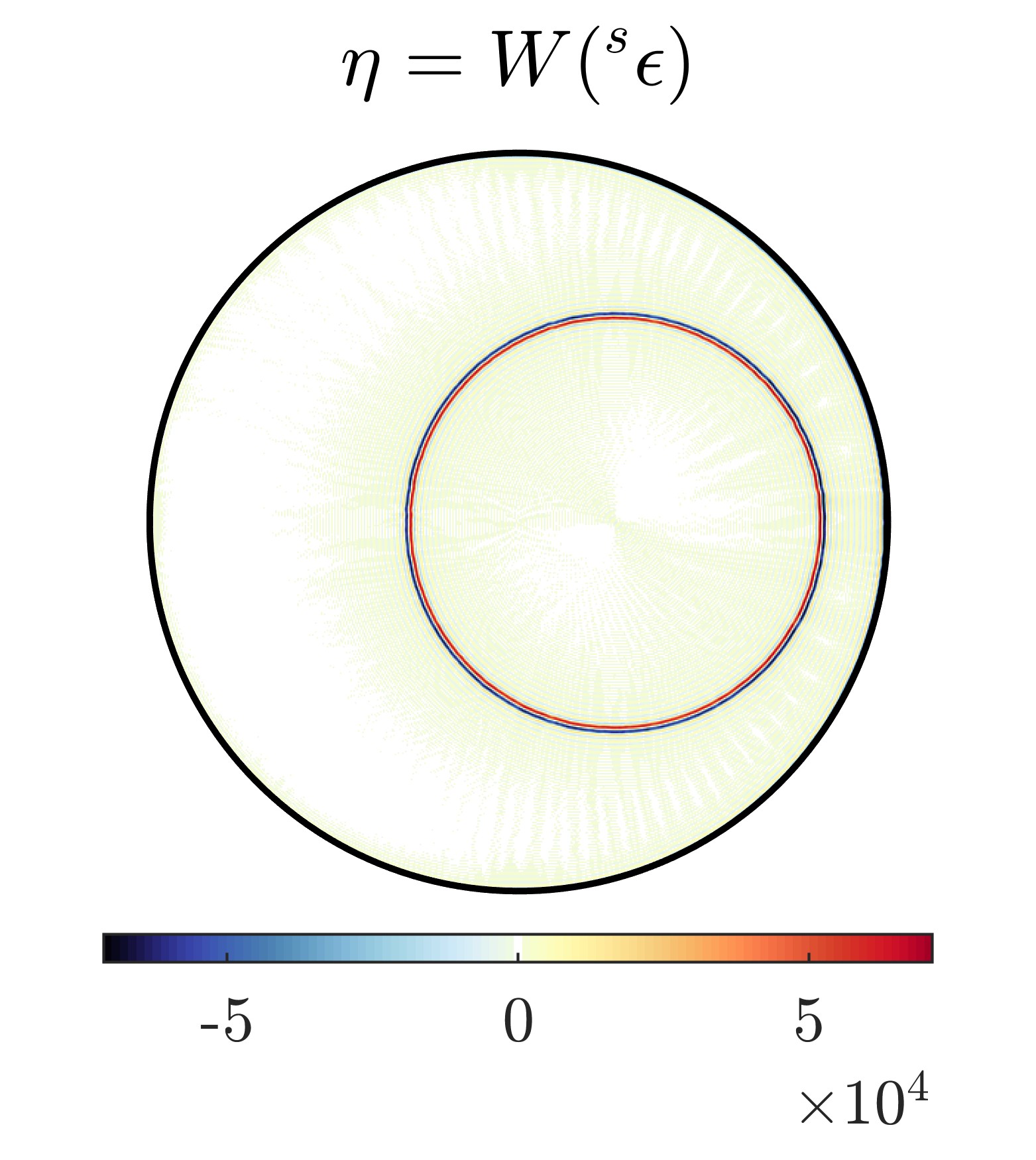}\\
    \includegraphics[width=0.32\linewidth]{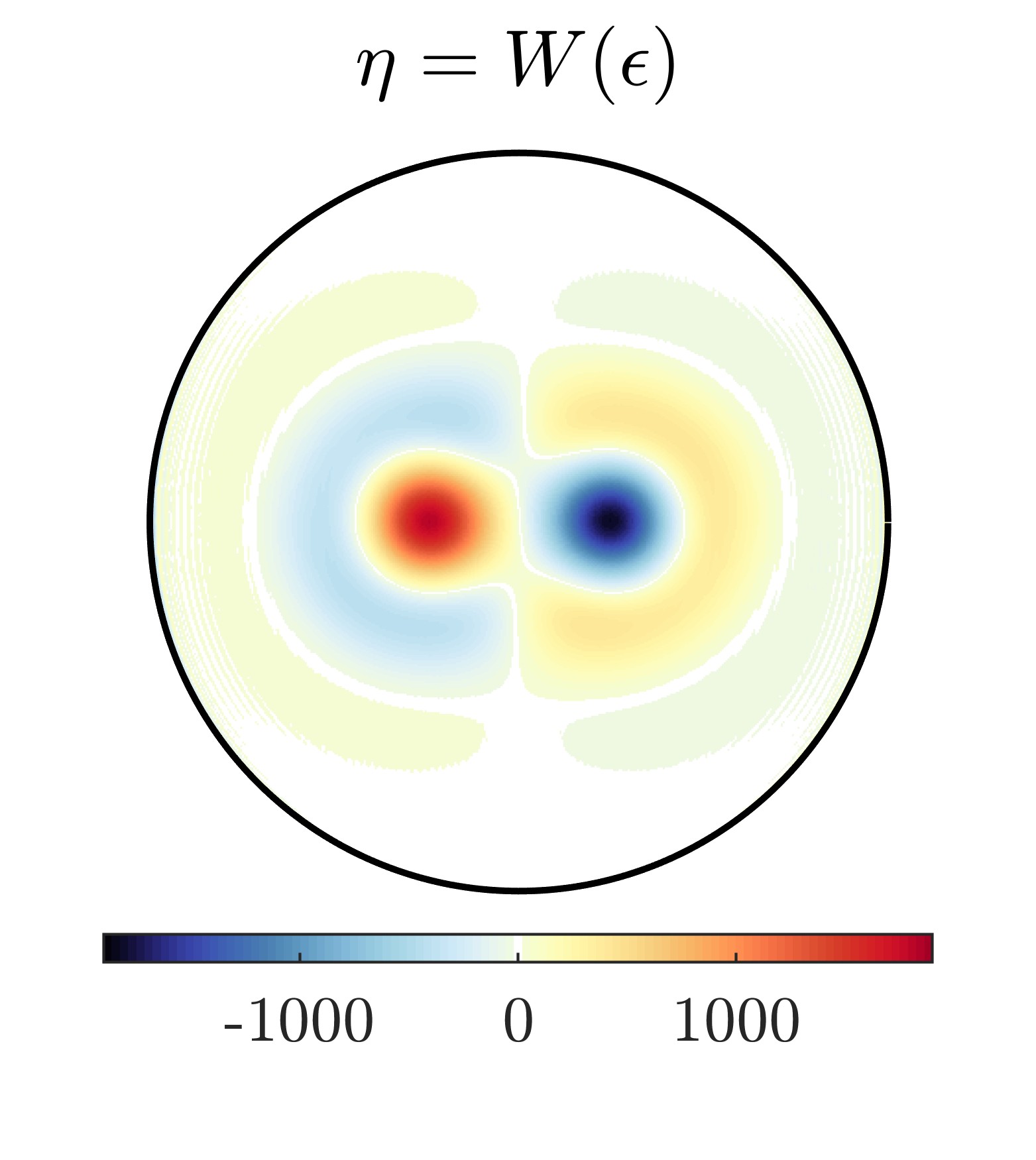}
    \includegraphics[width=0.32\linewidth]{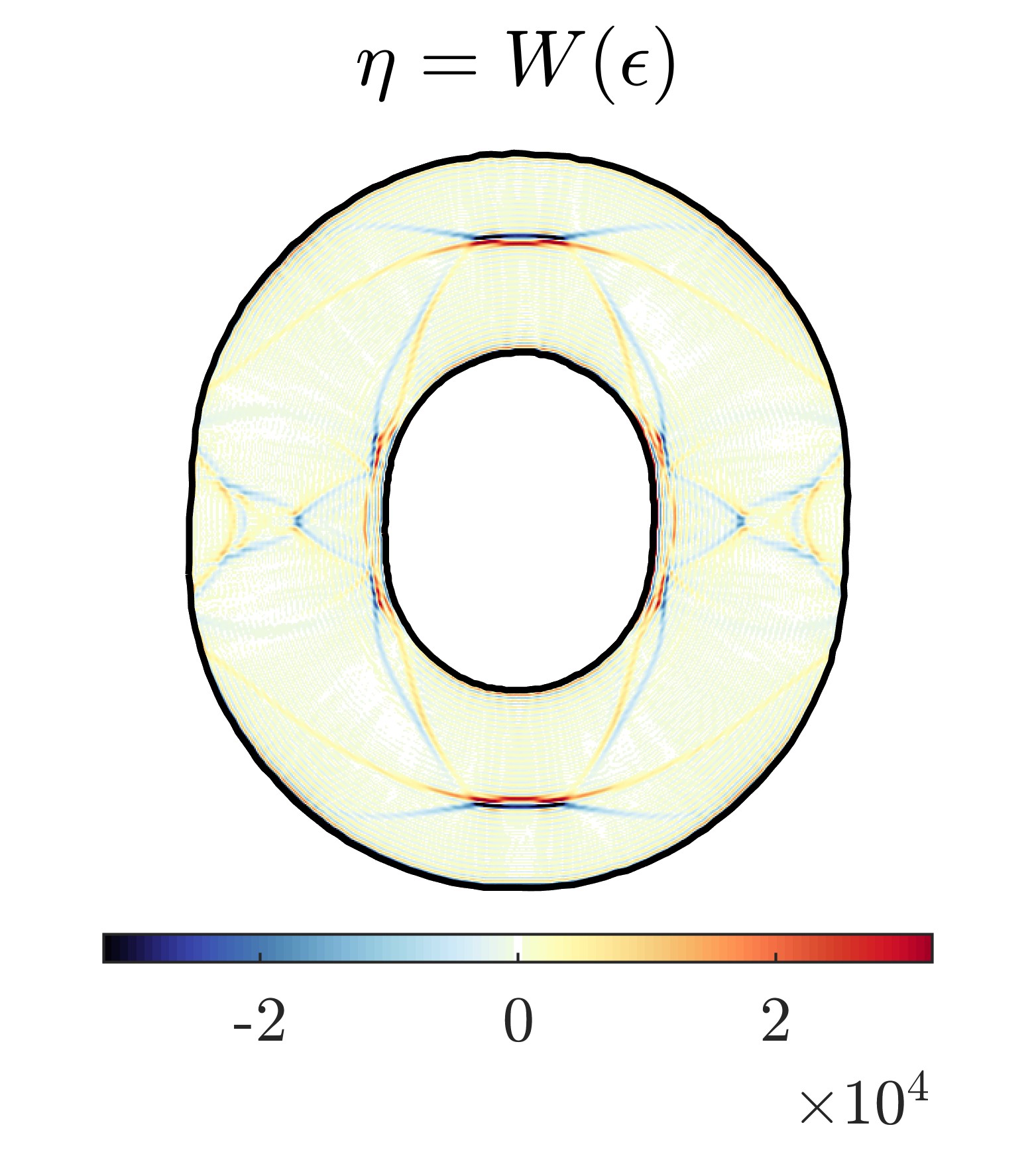}
    \includegraphics[width=0.32\linewidth]{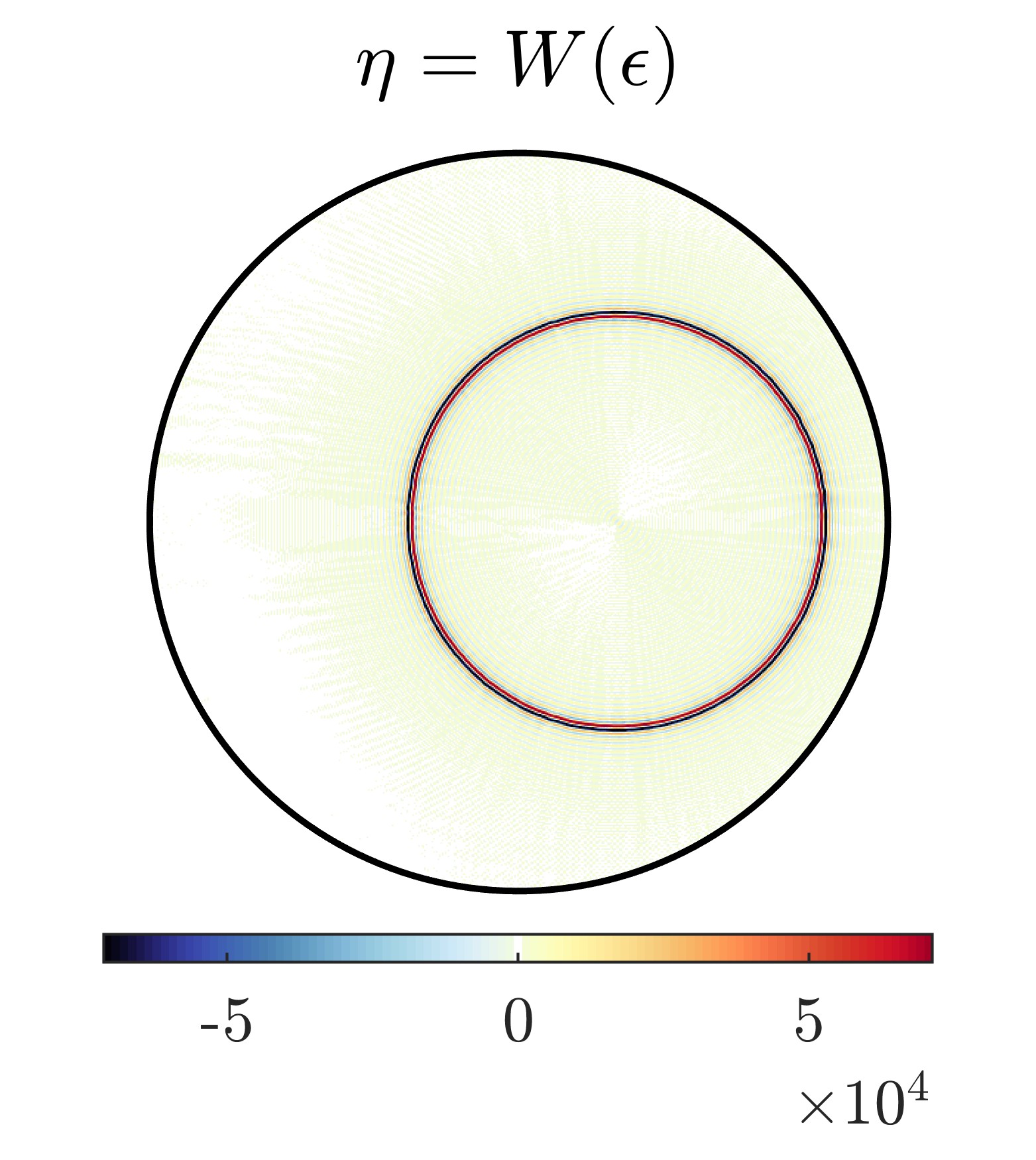}
    \caption{The Saint-Venant operator as applied to the reconstructed solenoidal components compared to the same for the original strain fields. }
    \label{Etaplot}
\end{center}
\end{figure} 

The difference between the reconstructions and the original field was small; typically around 1-5\% of the maximum value of the original components.  However, it was observed that this did not significantly decrease along with the number of projections.  The source of this persistent discrepancy was discretisation error related to minor deviations from the equilibrium relation introduced by various interpolations onto the regular grid.  This is examined further in the following section.

Figure \ref{Etaplot} shows the computed Saint-Venant incompatibility of the reconstructed solenoid compared to the original for all three fields.  These images were calculated using a similar transform-filter-transform approach in the Fourier domain. 

The Airy stress field shows incompatibility distributed over the sample domain, whereas the other two samples show more localised support.  In the case of the crushed-ring, this is likely to have originated from localised plastic shear within the elasto-plastic finite element model, while the offset ring-and-plug indicates a clear dipole around the circumference of the plug corresponding to the interference.

As expected, the incompatibility of the reconstructed solenoidal components are identical to that of the original fields within a small amount of numerical noise.

\subsection{Reconstruction in the presence of measurement uncertainty}

A further set of simulations was carried out in order to examine the behaviour of reconstructions in the presence of Gaussian noise.  In this respect both approaches were found to be quite stable and converged to the original field with an increasing number of projections (notwithstanding the discretisation error identified earlier).  

Although not strictly necessary, slight improvement was found by limiting the order of terms in the numerical derivatives used to compute $b$.  This was achieved by cutting-off the $\kappa$-space filters for frequencies above a certain threshold. A cut-off frequency equal to 0.7 times the maximum magnitude provided a good compromise between noise and fidelity.

\begin{figure}[h]
\begin{center}
    \includegraphics[width=0.95\linewidth]{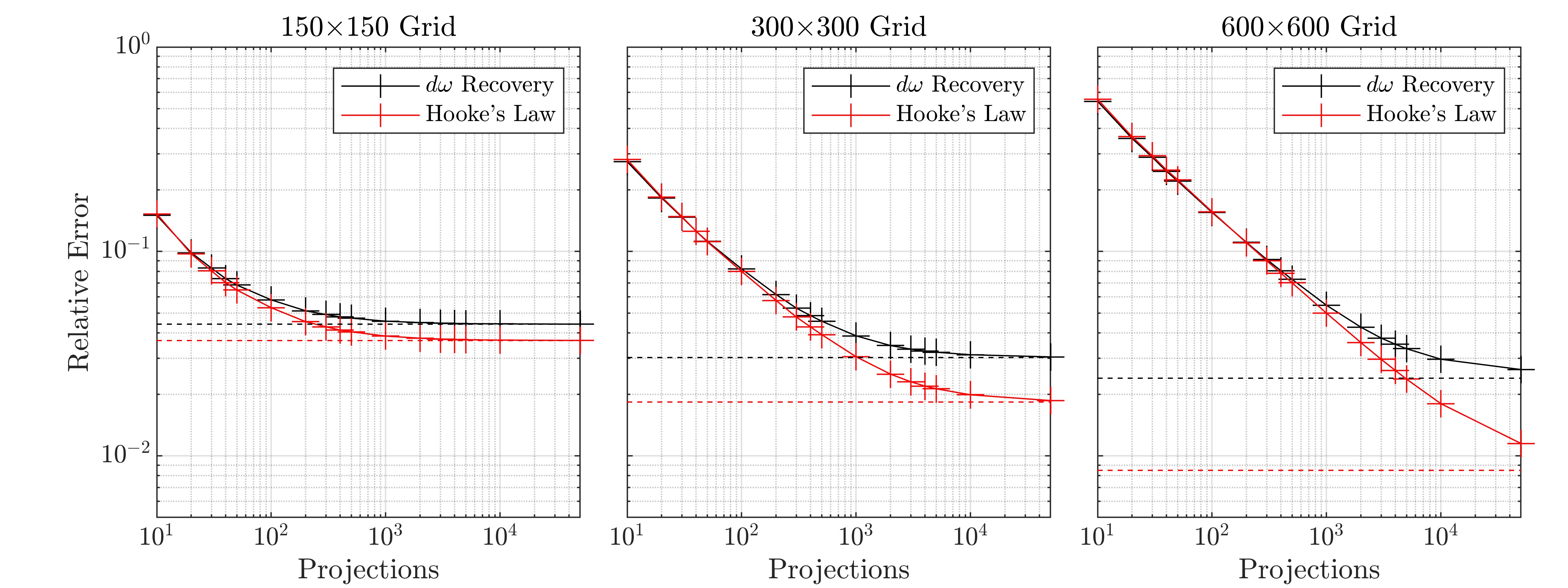}
    \caption{The overall error in the reconstruction of the Airy stress field in the presence of 10\% Gaussian measurement noise as a function of the number of projections.  The relative error is computed as the root-mean-square of the residual divided by the root-mean-square of the original strain field summed over all components. Dotted lines show the minimum error possible for the given mesh density (calculated using 50,000 projections with no added noise). }
    \label{RelErr}
\end{center}
\end{figure} 

For the Airy stress field, Figure \ref{RelErr} shows the convergence of the reconstructed fields along with the number of projections in the presence of Gaussian random noise with a standard deviation of 10\% of the maximum LRT value.  Results from three systems are shown corresponding to different spatial resolutions (i.e. grid size).  In each case, convergence of the relative error to zero is observed to occur at $\mathcal{O}(n^{-1/2})$ until the lower limit corresponding to the discretisation error is reached.  

Generally speaking, the reconstruction based on Hooke's law had a lower persistent error and the size of the persistent error was observed to be directly related to the resolution of the grid.

It should be noted that, in the presence of noise the calculation of the Saint-Venant operator was found to be inherently unstable regardless of any reasonable cut-off frequency used in the relevant filters. 

\subsection{Boundary traction and compact support} \label{Support}

In order to examine the effect of the boundary conditions, a further set of simulations were carried out on the strain field specified in Appendix A of Gregg \textit{et al} \cite{gregg2017tomographic} with $e_0=R=1$ (see Figure \ref{BCFigure}a).  This is an axi-symmetric `plane-stress' strain field on the unit disk originating from the hydrostatic eigen-strain
\[
\epsilon^*_{rr}=\epsilon^*_{\theta\theta}=(1-r)^2,
\]
and subject to a zero traction boundary condition (i.e. $\sigma_{rr}(1)=0$).  In polar coordinates it has the form
\begin{align}
    \epsilon_{rr}&=\frac{7+5\nu+(1+\nu)(9r-16)r}{12}-(1-r)^2 \\
    \epsilon_{\theta\theta}&=\frac{7+5\nu+(1+\nu)(3r-8)r}{12}-(1-r)^2.
\end{align}

\begin{figure}[h!]
\begin{center}
    \includegraphics[width=0.24\linewidth]{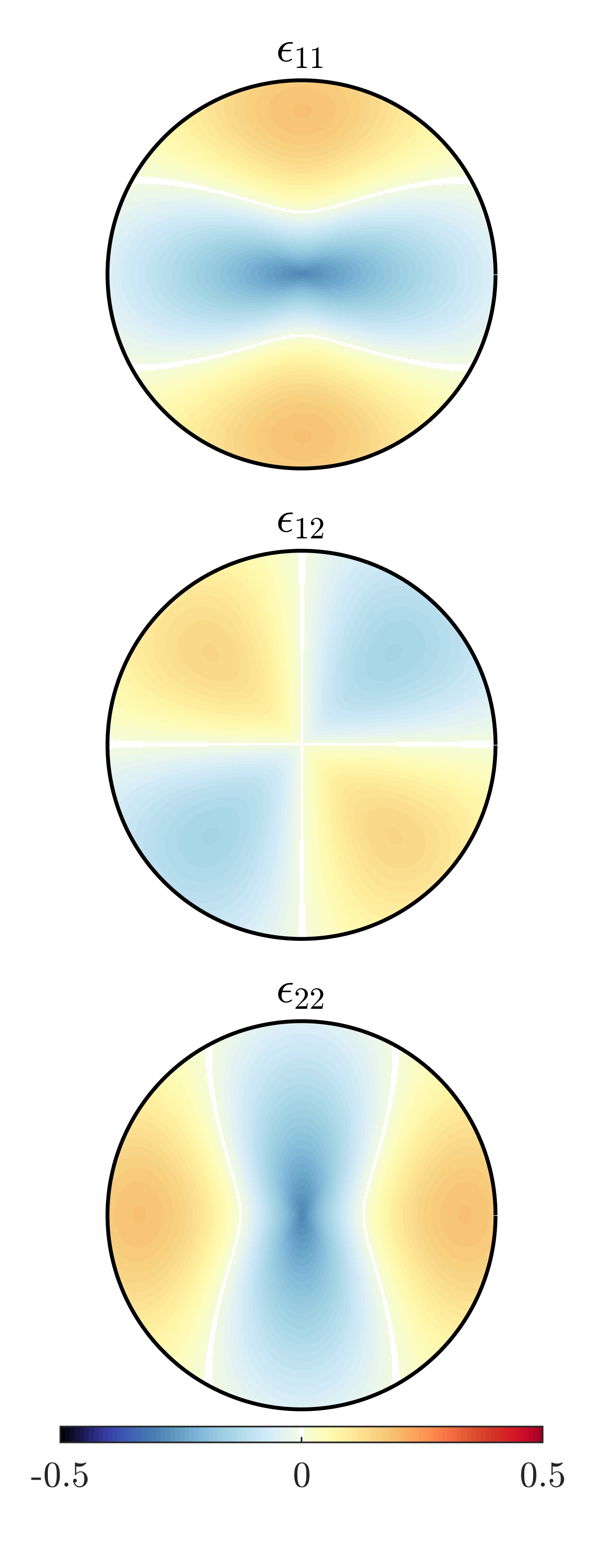}
    \includegraphics[width=0.24\linewidth]{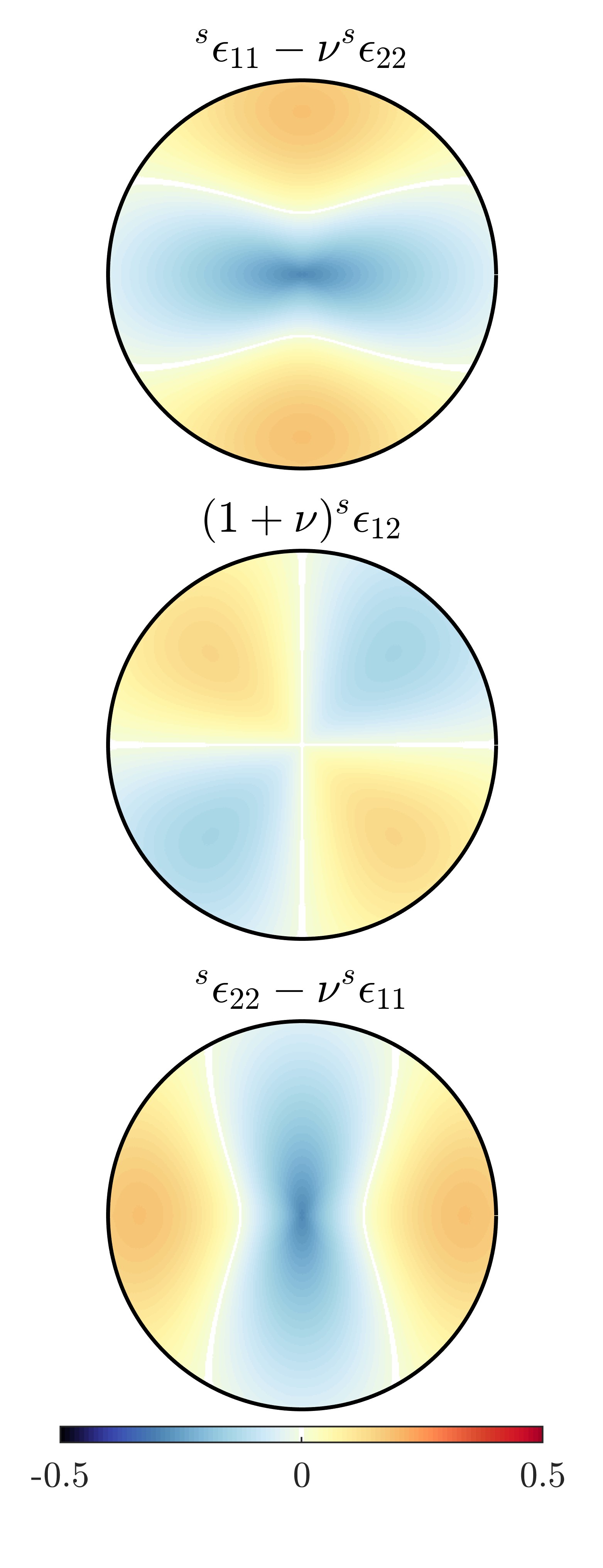}
    \begin{tikzpicture}
        \draw (-10,0) -- (-10,8.5);
    \end{tikzpicture}
    \includegraphics[width=0.24\linewidth]{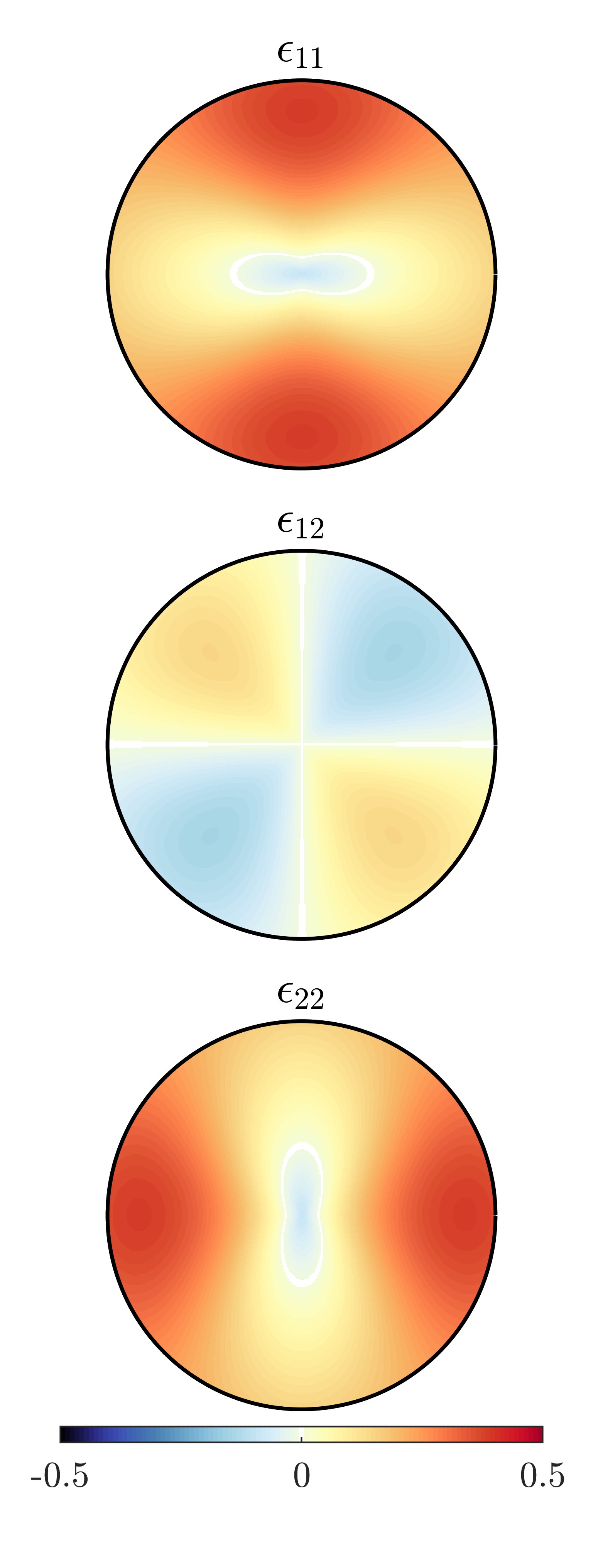}
    \includegraphics[width=0.24\linewidth]{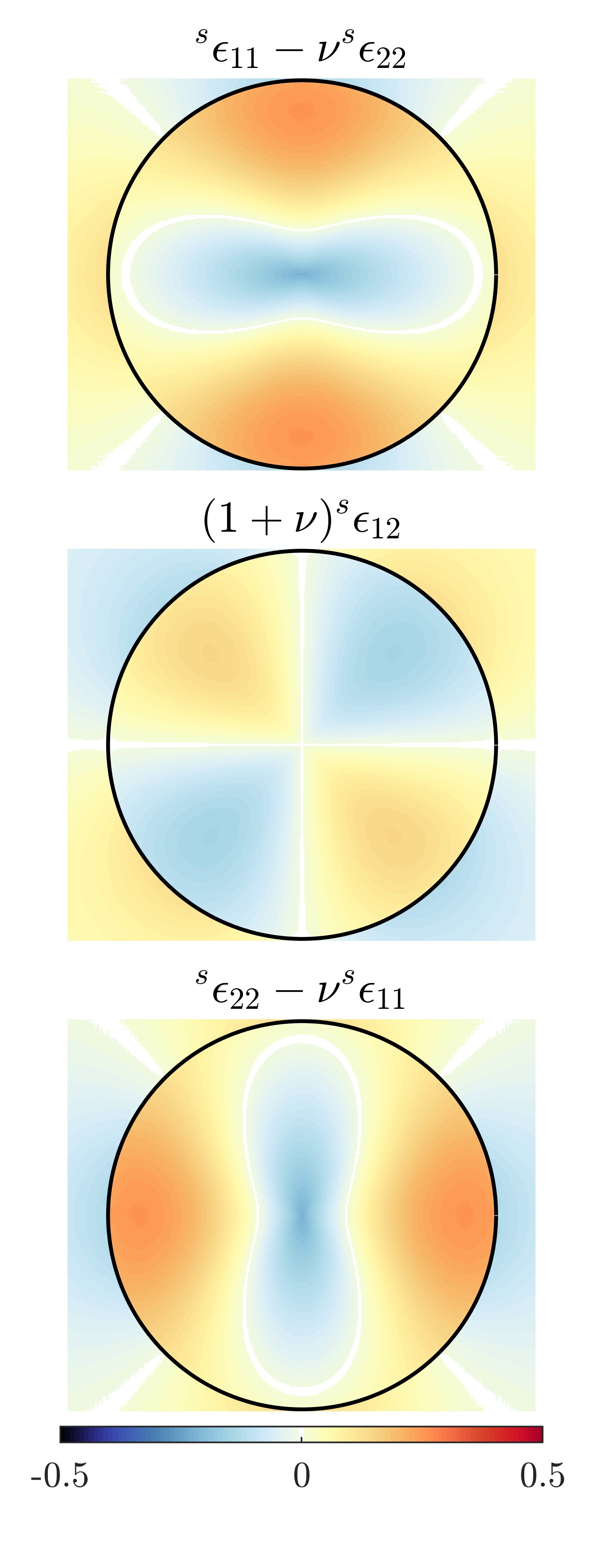}
    \put(-380,225){(a)} \put(-282.5,225){(b)} \put(-183.5,225){(d)} \put(-85,225){(e)} \\
    \includegraphics[width=0.49\linewidth]{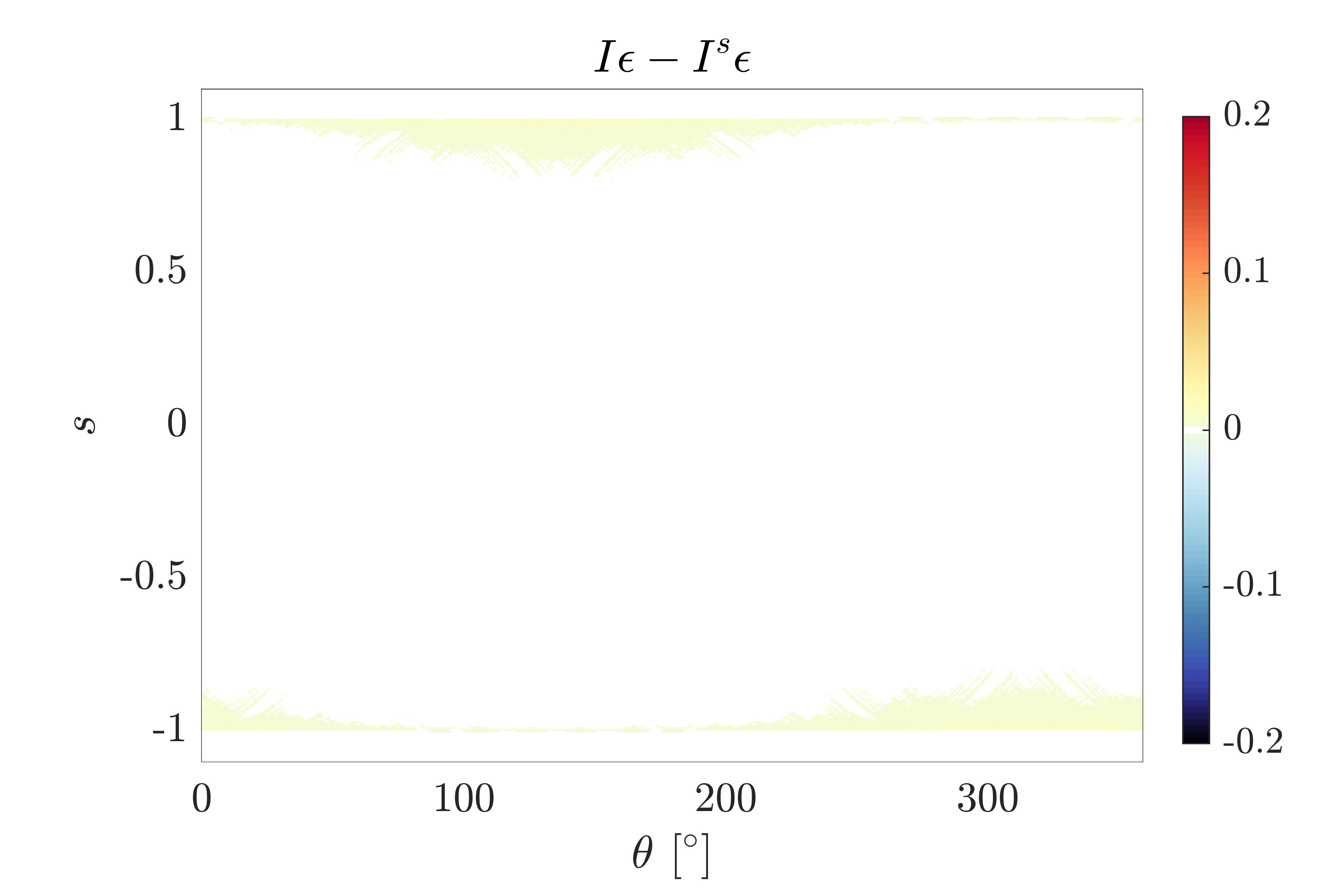}
    \begin{tikzpicture}
        \draw (-10,0) -- (-10,4.5);
    \end{tikzpicture}
    \includegraphics[width=0.49\linewidth]{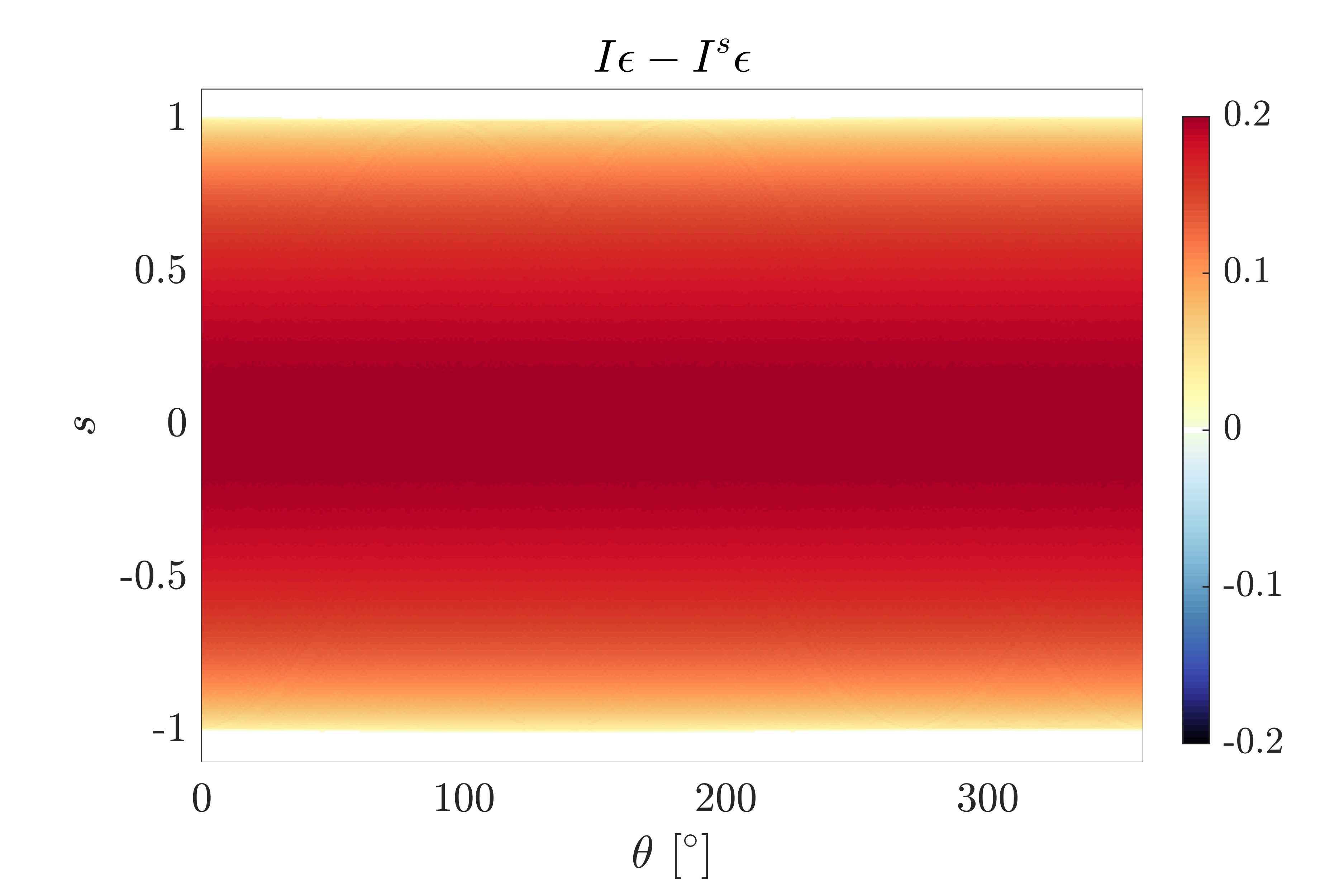} 
    \put(-380,115){(c)} \put(-185,115){(f)}
    \caption{Demonstration of the effect of the no-traction boundary condition on reconstruction. (a) An axisymmetric residual 'plane-stress' strain field that satisfies the no-traction boundary condition (see \cite{gregg2017tomographic}), along with (b) its reconstruction, and (c) the residual LRT between these fields. (d) The same field with an additional hydrostatic component that violates the no-traction condition, along with (e) a failed reconstruction and (f) the non-zero residual.}
    \label{BCFigure}
\end{center}
\end{figure} 

A simulated reconstruction based on 1000 equally spaced LRT projections from a $400\times400$ Cartesian grid is shown in Figure \ref{BCFigure}b.  As expected, the reconstructed strain matches the original field accurately and the support of the reconstruction is contained within the boundary of the sample. Outside of the boundary, the reconstructed solenoidal component was around three orders-of-magnitude smaller than the original field.

Figure \ref{BCFigure}c shows the residual between the LRT of the original field and the reconstruction.

Figure \ref{BCFigure}d shows the same field with the addition of a constant hydrostatic strain of magnitude $\bar\epsilon=0.2$. Like the original field, this altered version satisfies equilibrium at all points within the sample, however it clearly violates the traction-free boundary condition since $| \sigma \cdot n |=0.2$ on $\partial\Omega$.

An attempted reconstruction of this field based on the same process is shown in Figure \ref{BCFigure}e.  A visual inspection of the result clearly indicates the reconstruction has failed to reproduce the original field.  

It is also interesting to note that the reconstructed field is far from zero outside the boundary of the sample.  This observation, together with Lemma \ref{lem_ext} suggests that the apparent support of $^s\epsilon$ reconstructed from data gives a reliable indicator of the existence of a harmonic potential component, and hence the appropriateness of the traction-free assumption for a given experimental system.

It is also clear that the LRT of the reconstructed solenoid does not match that of the original field. Figure \ref{BCFigure}f shows the difference between these two sinograms computed with $^s\epsilon$ masked to zero outside the boundary.  The residual is of a significant magnitude and appears to correspond directly to the added hydrostatic/harmonic component. This poses an interesting question: Given the harmonic component is compatible, can it be recovered through reconstruction of a non-zero boundary condition similar to the process carried out by Hendriks \textit{et al} \cite{hendriks2017bragg}?  This question will form the focus of future work in this area.

\section{Numerical demonstration: Experimental data}

As a final demonstration, the reconstruction approach was applied to experimental data measured from physical samples using the RADEN energy resolved imaging instrument within the Materials and Life Sciences institute at the J-PARC spallation neutron source in Japan \cite{shinohara2020energy}.  All relevant details of this experiment are described in Gregg \textit{et al} \cite{gregg2018tomographic}.  The outcome of this experiment was measured strain-sinograms from the crushed-ring and offset ring-and-plug samples corresponding to a set of 50 golden-angle projections.  As per \eqref{BEStrain}, these measurements correspond to average strain along ray-paths, which require multiplication by appropriate values of $L$ to compute the LRT (see Figure \ref{RealData}a and \ref{RealData}b).

\begin{figure}[h!]
\begin{center}
    \includegraphics[width=0.49\linewidth]{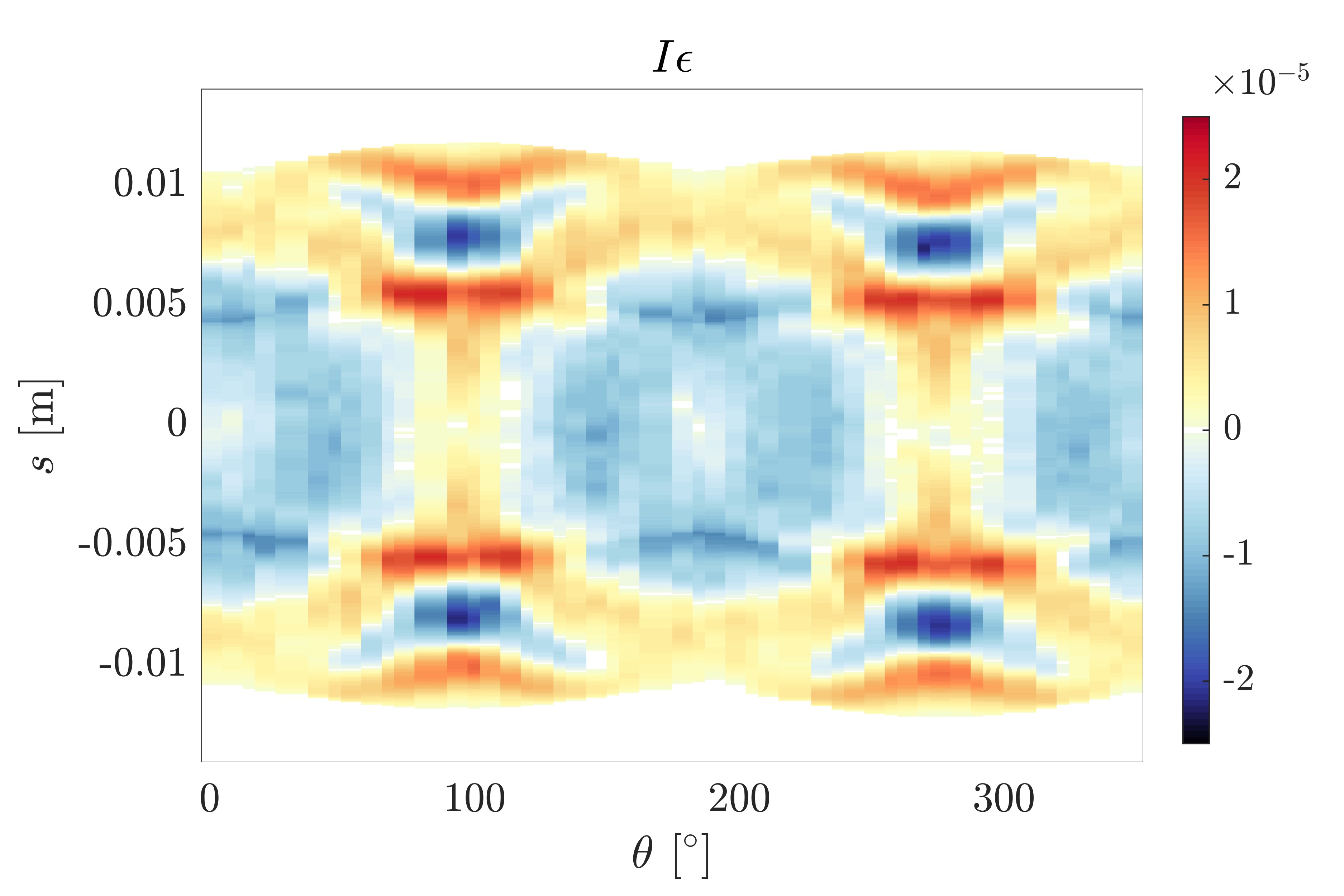}
    \begin{tikzpicture}
        \draw (-10,0) -- (-10,4.5);
    \end{tikzpicture}
    \includegraphics[width=0.49\linewidth]{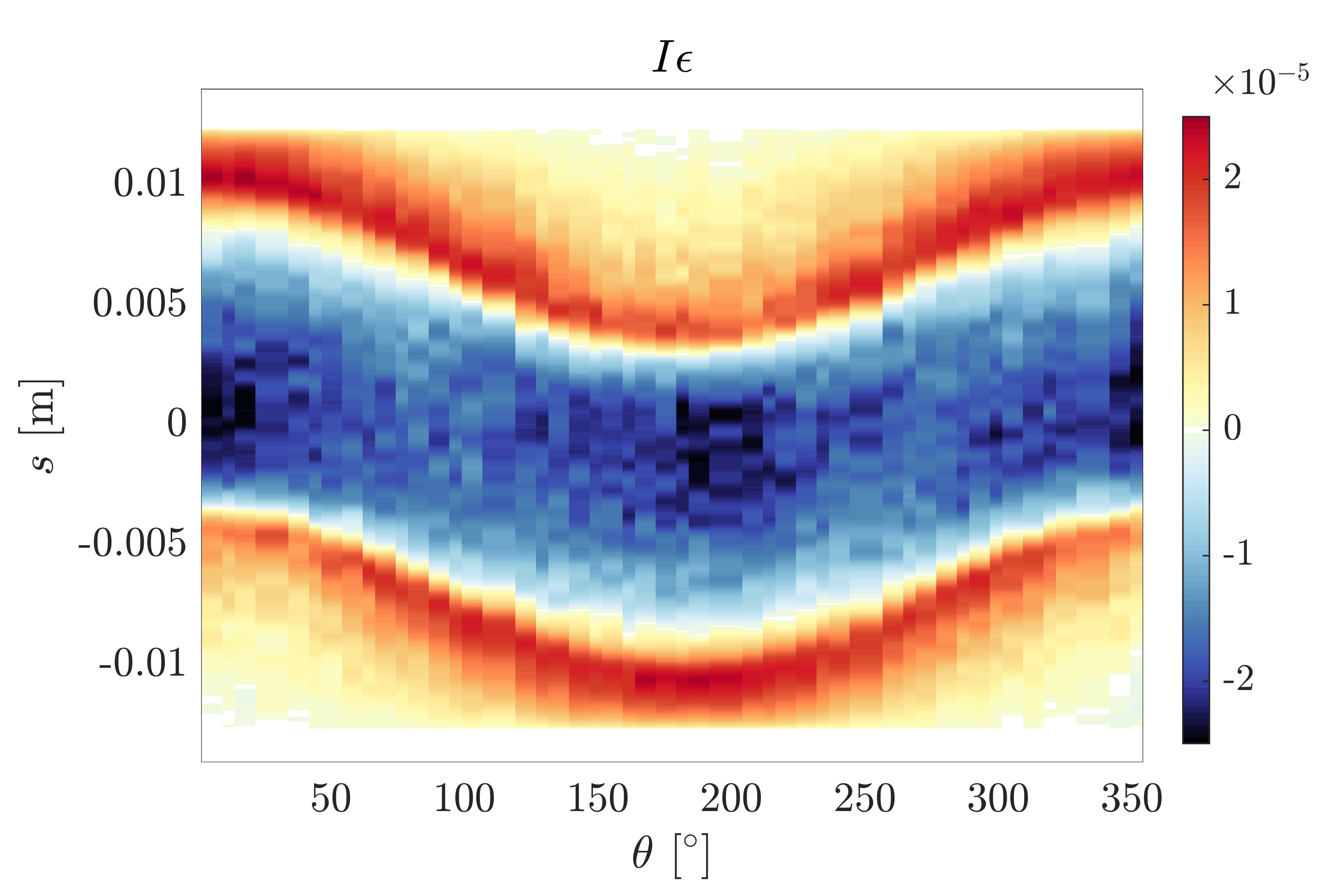} 
    \put(-380,115){(a)} \put(-185,115){(b)}\\
    \includegraphics[width=0.24\linewidth]{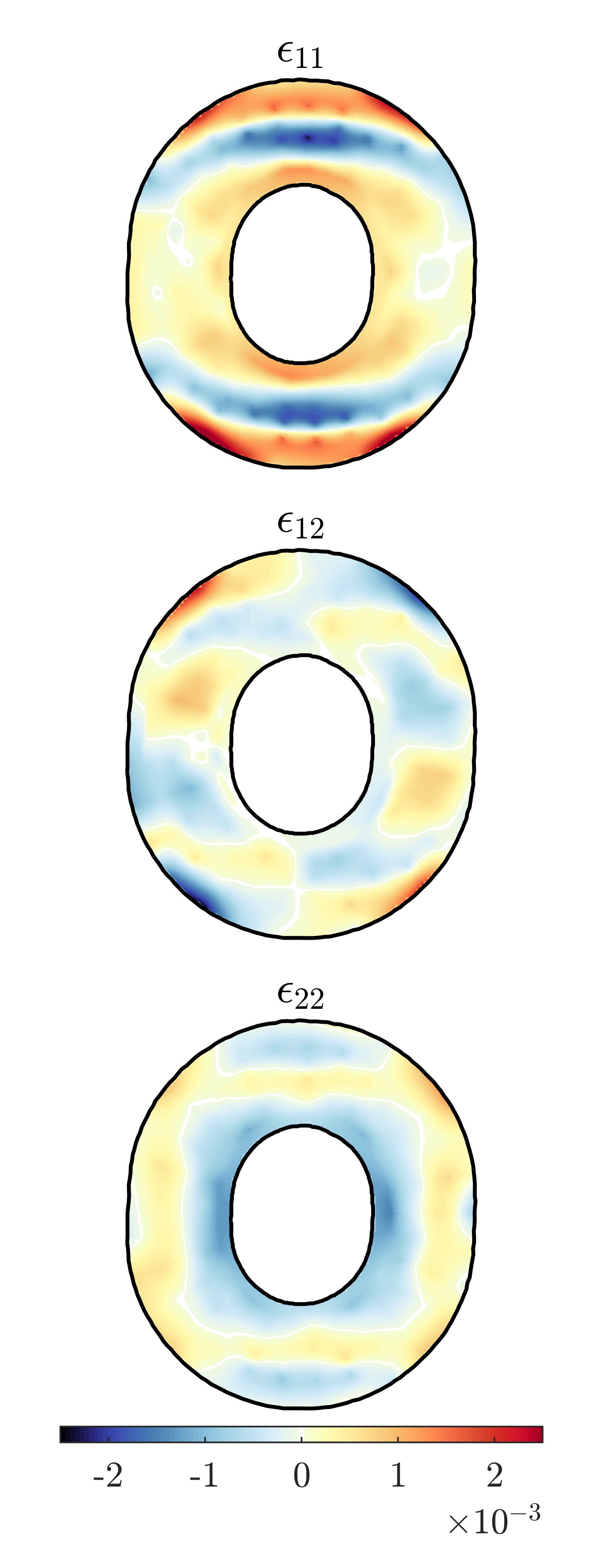}
    \includegraphics[width=0.24\linewidth]{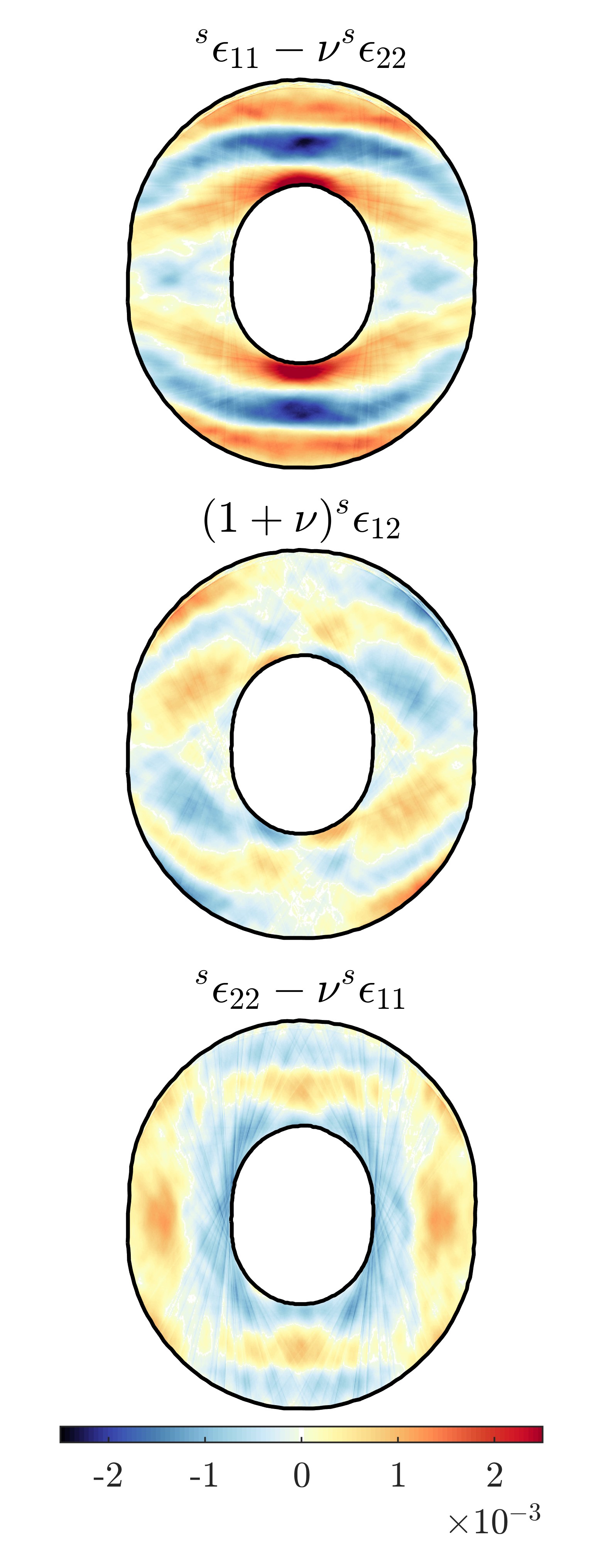}
    \begin{tikzpicture}
        \draw (-10,0) -- (-10,8.5);
    \end{tikzpicture}
    \includegraphics[width=0.24\linewidth]{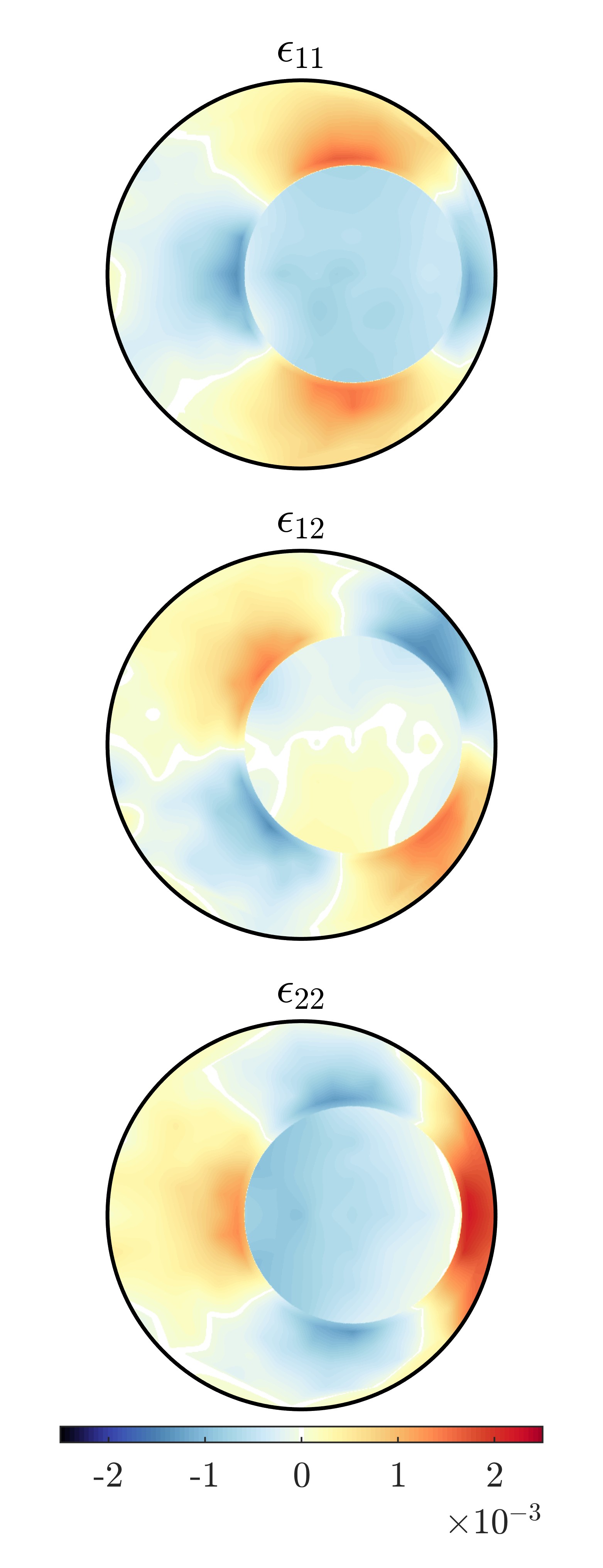}
    \includegraphics[width=0.24\linewidth]{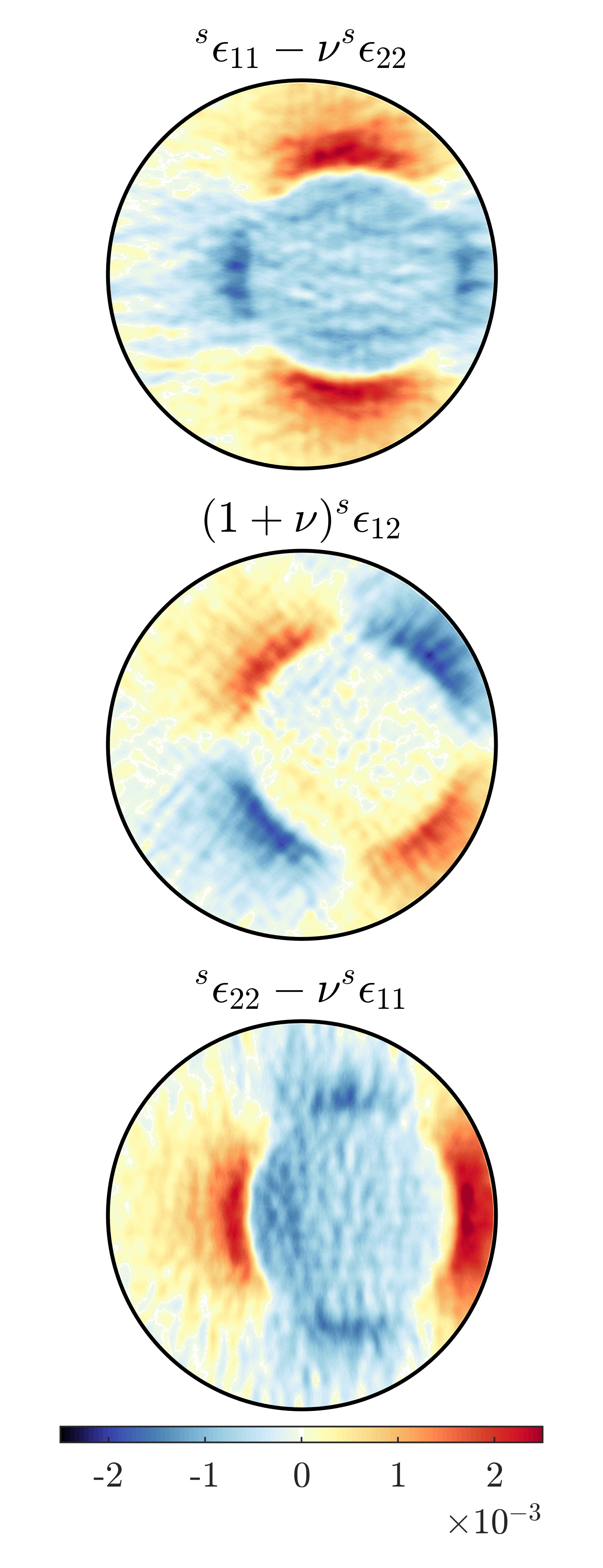}
    \put(-380,225){(c)} \put(-282.5,225){(d)} \put(-183.5,225){(e)} \put(-85,225){(f)}
    \caption{Reconstruction of residual strain fields from real data. (a) and (b) Measured LRT data from the crushed-ring and offset ring-and-plug samples using Bragg-edge strain imaging on the RADEN energy-resolved neutron imaging instrument \cite{gregg2018tomographic} (c) and (e) Reference measurements from each sample taken using traditional neutron diffraction based strain measurement techniques on the KOWARI engineering diffractometer (see \cite{hendriks2019robust}).  (d) and (f) reconstructed strain fields formed by the sum of the reconstructed solenoidal and recovered potential components. }
    \label{RealData}
\end{center}
\end{figure} 

Figure \ref{RealData}d and \ref{RealData}f show the results of the reconstruction based on Hooke's law compared to traditional neutron diffraction based strain measurements from the KOWARI engineering diffractometer at the Australian Centre for Neutron Scattering within the Australian Nuclear Science and Technology Organisation \cite{kirstein2010kowari}.  This reference data (Figure \ref{RealData}c and \ref{RealData}e) is in the form of interpolated/inferred fields computed from scattered measurements using a technique that guarantees equilibrium is satisfied at each point \cite{hendriks2019robust}.

Overall the reconstruction has performed well in terms of overall magnitude and distribution within the limits of resolution.  In particular, the reconstructions show remarkable similarity to that of previous work from the same data by Gregg \textit{et al} \cite{gregg2018tomographic} based on constrained least squares optimisation of Fourier basis functions.

\section{Conclusion}

A direct link has been established between the concept of Airy stress potentials in two-dimensional elastic systems and the standard Helmholtz decomposition at the heart of the LRT and its null space. For homogeneous, isotropic materials under plane-strain or plane-stress conditions, when the stress field satisfies equilibrium and has zero boundary traction, then the Helmholtz decomposition of the strain field can be written in terms of an Airy stress potential allowing for identification of the solenoidal and potential parts, which will have compact support.

Through this lens, direct approaches for the reconstruction of two-dimensional elastic strain fields from LRT data have been developed and demonstrated.  We show that a tensorial version of standard FBP recovers the solenoidal (divergence free) component of the strain field, which can then be used to determine the original field through the application of Hooke's law or a process involving the numerical solution of a standard elasticity problem.  In simulation, both approaches were found to be robust to measurement noise.  Both approaches also performed well on real experimental data.

From this perspective, it was also possible to identify the result of standard scalar FBP when applied to LRT measurement as the trace of the solenoidal component.  In some situations (e.g. plane-stress or plane-strain) this can be related to the trace of the stress tensor, however in general, more information is required to bring meaning to such a reconstruction in a three-dimensional system.

\section{Acknowledgements}

This work is supported by the Australian Research Council through a Discovery Project Grant (DP170102324).  Access to the RADEN and KOWARI instruments was made possible through the respective user access programs of J-PARC and ANSTO (J-PARC Long Term Proposal 2017L0101 and ANSTO Program Proposal PP6050).

Contributions from W Lionheart and S Holman were supported by the Engineering and Physical Sciences Research Council through grant EP/V007742/1.

Contributions from A Polyakova and I Svetov were supported by the framework of the government assignment of the Sobolev Institute of Mathematics, project FWNF-2022-0009.

Contributions from Matias Courdurier were partially supported by ANID Millennium Science Initiative Program through Millennium Nucleus for Applied Control and Inverse Problems NCN19-161.

The authors would also like to thank the Isaac Newton Institute for Mathematical Sciences  for support and hospitality during the program Rich and Non-linear Tomography: A Multidisciplinary Approach when work on this paper was undertaken. This program was supported by EPSRC grant number EP/R014604/1.  

While in Cambridge, all authors received support from the Simons Foundation. C Wensrich would also like to thank Clare Hall for their support and hospitality over this period.

\bibliographystyle{elsarticle-num} 
\bibliography{References}

\begin{thebibliography}{10}
\expandafter\ifx\csname url\endcsname\relax
  \def\url#1{\texttt{#1}}\fi
\expandafter\ifx\csname urlprefix\endcsname\relax\def\urlprefix{URL }\fi
\expandafter\ifx\csname href\endcsname\relax
  \def\href#1#2{#2} \def\path#1{#1}\fi

\bibitem{hendriks2017bragg}
J.~N. Hendriks, A.~W. Gregg, C.~M. Wensrich, A.~S. Tremsin, T.~Shinohara,
  M.~Meylan, E.~H. Kisi, V.~Luzin, O.~Kirsten, {B}ragg-edge elastic strain
  tomography for \textit{in situ} systems from energy-resolved neutron
  transmission imaging, Physical Review Materials 1~(5) (2017) 053802.

\bibitem{abbey2012neutron}
B.~Abbey, S.~Y. Zhang, M.~Xie, X.~Song, A.~M. Korsunsky, Neutron strain
  tomography using bragg-edge transmission, International journal of materials
  research 103~(2) (2012) 234--241.

\bibitem{kirkwood2015neutron}
H.~J. Kirkwood, S.~Y. Zhang, A.~S. Tremsin, A.~M. Korsunsky, N.~Baimpas,
  B.~Abbey, Neutron strain tomography using the radon transform, Materials
  Today: Proceedings 2 (2015) S414--S423.

\bibitem{gregg2018tomographic}
A.~Gregg, J.~Hendriks, C.~Wensrich, A.~Wills, A.~Tremsin, V.~Luzin,
  T.~Shinohara, O.~Kirstein, M.~Meylan, E.~Kisi, Tomographic reconstruction of
  two-dimensional residual strain fields from {B}ragg-edge neutron imaging,
  Physical Review Applied 10~(6) (2018) 064034.

\bibitem{hendriks2019tomographic}
J.~Hendriks, A.~Gregg, R.~Jackson, C.~Wensrich, A.~Wills, A.~Tremsin,
  T.~Shinohara, V.~Luzin, O.~Kirstein, Tomographic reconstruction of triaxial
  strain fields from {B}ragg-edge neutron imaging, Physical Review Materials
  3~(11) (2019) 113803.

\bibitem{lionheart2015diffraction}
W.~R. Lionheart, P.~J. Withers, Diffraction tomography of strain, Inverse
  Problems 31~(4) (2015) 045005.

\bibitem{sharafutdinov2012integral}
V.~A. Sharafutdinov, Integral geometry of tensor fields, Vol.~1, Walter de
  Gruyter, 2012.

\bibitem{louis2022inversion}
A.~K. Louis, Inversion formulae for ray transforms in vector and tensor
  tomography, Inverse Problems 38~(6) (2022) 065008.

\bibitem{derevtsov2015tomography}
E.~Y. Derevtsov, I.~E. Svetov, Tomography of tensor fields in the plain,
  Eurasian J. Math. Comput. Appl 3~(2) (2015) 24--68.

\bibitem{schweizer}
B.~Schweizer, On {F}riedrichs inequality, {H}elmholtz decomposition, vector
  potentials, and the div-curl lemma, in: Trends in Applications of Mathematics
  to Mechanics, Springer, 2018, pp. 65--79.

\bibitem{korsunsky2017teaching}
A.~Korsunsky, A teaching essay on residual stresses and eigenstrains,
  Butterworth-Heinemann, 2017.

\bibitem{mura1982micromechanics}
T.~Mura, Micromechanics of defects in solids, Martinus Nijhoff Publishers,
  1982.

\bibitem{yavari2013compatibility}
A.~Yavari, Compatibility equations of nonlinear elasticity for
  non-simply-connected bodies, Archive for Rational Mechanics and Analysis 209
  (2013) 237--253.

\bibitem{Timoshenko1970Theory}
S.~P. Timoshenko, J.~N. Goodier, Theory of elasticity, 3rd Edition,
  McGraw-Hill, New York, NY, 1970.

\bibitem{busi2022bragg}
M.~Busi, E.~Polatidis, F.~Malamud, W.~Kockelmann, M.~Morgano, A.~Kaestner,
  A.~Tremsin, N.~Kalentics, R.~Log{\'e}, C.~Leinenbach, et~al., Bragg edge
  tomography characterization of additively manufactured 316l steel, Physical
  Review Materials 6~(5) (2022) 053602.

\bibitem{zhu2023bragg}
B.~Zhu, N.~Leung, W.~Kockelmann, M.~Gorley, M.~J. Whiting, Y.~Wang, T.~Sui,
  Neutron bragg edge tomography characterisation of residual strain in a
  laser-welded eurofer97 joint, Nuclear Materials and Energy (2023) 101462.

\bibitem{gregg2017tomographic}
A.~Gregg, J.~Hendriks, C.~Wensrich, M.~Meylan, Tomographic reconstruction of
  residual strain in axisymmetric systems from {B}ragg-edge neutron imaging,
  Mechanics Research Communications 85 (2017) 96--103.

\bibitem{shinohara2020energy}
T.~Shinohara, T.~Kai, K.~Oikawa, T.~Nakatani, M.~Segawa, K.~Hiroi, Y.~Su,
  M.~Ooi, M.~Harada, H.~Iikura, et~al., The energy-resolved neutron imaging
  system, raden, Review of Scientific Instruments 91~(4) (2020) 043302.

\bibitem{hendriks2019robust}
J.~Hendriks, C.~Wensrich, A.~Wills, V.~Luzin, A.~Gregg, Robust inference of
  two-dimensional strain fields from diffraction-based measurements, Nuclear
  Instruments and Methods in Physics Research Section B: Beam Interactions with
  Materials and Atoms 444 (2019) 80--90.

\bibitem{kirstein2010kowari}
O.~Kirstein, U.~Garbe, V.~Luzin, {KOWARI} - {OPAL}'s new stress diffractometer
  for the engineering community: Capabilities and first results, in: Materials
  Science Forum, Vol. 652, Trans Tech Publ, 2010, pp. 86--91.

\end{thebibliography}

\end{document}